\crefname{equation}{}{}
\crefname{algocf}{Algorithm}{Algorithms}
\crefname{equation}{}{} 
\colorlet{refkey}{orange!20}
\colorlet{labelkey}{blue!30}
\crefname{algocf}{Algorithm}{Algorithms}
\numberwithin{equation}{section}
\newtheorem{theorem}{Theorem}[section]
\newtheorem{proposition}[theorem]{Proposition}
\newtheorem{lemma}[theorem]{Lemma}
\crefname{claim}{Claim}{Claims}
\newtheorem{conjecture}[theorem]{Conjecture}
\newtheorem*{question*}{Question}
\theoremstyle{definition}
\newtheorem{definition}[theorem]{Definition}
\newtheorem*{definition*}{Definition}
\theoremstyle{remark}
\newcommand{\Mod}[1]{~(\mathrm{mod}~#1)}
\newcommand{\mb}{\mathbb}
\newcommand{\on}{\operatorname}
\newcommand{\C}{\mathbb C}
\newcommand{\E}{\mathbb E}
\newcommand{\F}{\mathbb F}
\newcommand{\R}{\mathbb R}
\newcommand{\U}{\mathbb U}
\newcommand{\Z}{\mathbb Z}
\DeclareMathOperator{\Poly}{Poly}
\renewcommand{\Pr}{\mathbb P}
\title{Non-classical polynomials and the inverse theorem}
\author[Berger]{Aaron Berger}
\author[Sah]{Ashwin Sah}
\author[Sawhney]{Mehtaab Sawhney}
\author[Tidor]{Jonathan Tidor}
\address{Department of Mathematics, Massachusetts Institute of Technology, Cambridge, MA 02139, USA}
\thanks{Berger, Sah, Sawhney, and Tidor were supported by NSF Graduate Research Fellowship Program DGE-1745302.}
\email{\{bergera,asah,msawhney,jtidor\}@mit.edu}
\begin{document}

\begin{abstract}
In this note we characterize when non-classical polynomials are necessary in the inverse theorem for the Gowers $U^k$-norm. We give a brief deduction of the fact that a bounded function on $\F_p^n$ with large $U^k$-norm must correlate with a classical polynomial when $k\le p+1$. To the best of our knowledge, this result is new for $k=p+1$ (when $p>2$). We then prove that non-classical polynomials are necessary in the inverse theorem for the Gowers $U^k$-norm over $\F_p^n$ for all $k\ge p+2$, completely characterizing when classical polynomials suffice.
\end{abstract}

\maketitle

\section{Introduction}\label{sec:intro}

The inverse theorem for the Gowers $U^k$-norm states that a bounded function $f\colon G\to\C$ has large $U^k$-norm if and only if $f$ correlates with a certain structured object. When $G=\Z/N\Z$, these structured objects are quite complicated and need the theory of nilsequences to describe. When $G=\F_p^n$, the situation is somewhat simpler. When $p>k$, a bounded function $f\colon\F_p^n\to\C$ has large $U^k$-norm if and only if $f$ has non-negligible correlation with a polynomial phase function, i.e., $e^{2\pi i P(x)/p}$ where $P\colon\F_p^n\to\F_p$ is a polynomial of degree at most $k-1$.

The situation when $p$ is small compared to $k$ is more delicate. Green and Tao \cite{GT09} and independently Lovett, Meshulam, and Samorodnitsky \cite{LMS11} showed that the corresponding conjecture is false for $k=4$ and $p=2$. In other words, there exist bounded functions $f\colon\F_2^n\to\C$ with large $U^4$-norm but with correlation $o_{n\to\infty}(1)$ with every cubic phase function. Tao and Ziegler \cite{TZ12} clarified this situation by proving that for all $k$ and $p$, a bounded function $f\colon\F_p^n\to\C$ has large $U^k$-norm if and only if $f$ has non-negligible correlation with a non-classical polynomial phase function, i.e., $e^{2\pi i P(x)}$ where $P\colon\F_p^n\to\R/\Z$ is a non-classical polynomial of degree at most $k-1$. (See \cref{sec:background} for the relevant definitions.)

A natural question which remains from the above discussion is to determine for which pairs $p,k$ does the $U^k$-inverse theorem over $\F_p^n$ hold with classical polynomials. In the positive direction, it is known due to Samorodnitsky \cite{Sam07} that the $U^3$-inverse theorem over $\F_2^n$ holds with classical polynomials. In the negative direction, Lovett, Meshulam, and Samorodnitsky \cite{LMS11} proved that the $U^{p^\ell}$-inverse theorem over $\F_p^n$ requires non-classical polynomials for all $p$ and $\ell\geq 2$. (A curious feature of this problem is that it is not monotone in $k$, e.g., the Lovett-Meshulam-Samorodnitsky result does not imply that non-classical polynomials are necessary in the $U^k$-inverse theorem for all $k\geq p^2$.)

In this paper we completely characterize when classical polynomials suffice in the statement of the inverse theorem. We first prove the inverse theorem for the Gowers $U^{p+1}$-norm with classical polynomials. This result is proved via a short deduction from the usual inverse theorem for the $U^{p+1}$-norm that involves non-classical polynomials.\footnote{See \cref{sec:background} for the definitions and notation used in the statement of these results.}
 
\begin{theorem}\label{thm:p+1-inverse}
Fix a prime $p$ and $\delta>0$. There exists $\epsilon>0$ such that the following holds. Let $V$ be a finite-dimensional $\mb{F}_p$-vector space. Given a function $f\colon V\to\mb{C}$ satisfying $\|f\|_\infty\le1$ and $\|f\|_{U^{p+1}}>\delta$, there exists a classical polynomial $P\in\on{Poly}_{\leqslant p}(V\to\mb{F}_p)$ such that
\[|\mb{E}_{x\in V} f(x)e_p(-P(x))|\ge\epsilon.\]
\end{theorem}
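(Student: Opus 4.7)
The plan is to apply the non-classical $U^{p+1}$-inverse theorem to obtain a correlating non-classical polynomial phase, and then rewrite this phase as a bounded $\ell^1$-combination of classical polynomial phases of degree $\le p$. By the Tao-Ziegler inverse theorem there will exist a non-classical polynomial $P\colon V\to\R/\Z$ of degree $\le p$ with $|\E_x f(x)e^{-2\pi iP(x)}|\ge\epsilon_0$ for some $\epsilon_0=\epsilon_0(\delta)>0$. Using the Tao-Ziegler normal form for non-classical polynomials, I can write (up to a harmless constant phase)
\[
P(x)=Q(x)/p+R(x)/p^2\pmod 1,
\]
where $Q\in\Poly_{\le p}(V\to\F_p)$ is classical and the depth-one part has the form $R(x)=\sum_j b_j|x_j|$ with $b_j\in\{0,1,\dots,p-1\}$ and $|\cdot|\colon\F_p\to\{0,\dots,p-1\}\subset\Z$ the canonical lift.

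The key step will be to decompose $e^{-2\pi iR(x)/p^2}$ using the first two base-$p$ digits of the integer $R(x)$. Writing $R\equiv r_0+pr_1\pmod{p^2}$, we get $e^{-2\pi iR/p^2}=e^{-2\pi ir_0/p^2}\cdot e^{-2\pi ir_1/p}$. Here $r_0(x)=\sum_j b_jx_j\pmod p$ is a classical linear polynomial in $x$, while Lucas's theorem gives $r_1(x)\equiv\binom{R(x)}{p}\pmod p$. Fourier-expanding $h(r):=e^{-2\pi ir/p^2}$ as a function on $\F_p$ and using Parseval with Cauchy-Schwarz yields $\sum_{b\in\F_p}|\wh h(b)|\le\sqrt p$, so that
\[
e^{-2\pi iP(x)}=\sum_{b\in\F_p}\wh h(b)\,e^{-2\pi i(Q(x)+r_1(x)-br_0(x))/p}.
\]
Assuming $r_1$ is itself a classical polynomial of degree $\le p$, the triangle inequality then produces a classical $\wt P\in\Poly_{\le p}(V\to\F_p)$ with $|\E_x f(x)e^{-2\pi i\wt P(x)/p}|\ge\epsilon_0/\sqrt p$, completing the proof with $\epsilon=\epsilon_0/\sqrt p$.

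The main obstacle I anticipate is precisely verifying that $r_1(x)\equiv\binom{R(x)}{p}\pmod p$ admits a classical polynomial representation over $\F_p$ of total degree at most $p$. The approach I have in mind starts from the polynomial identity $R(R-1)\cdots(R-p+1)\equiv R^p-R\pmod p$ (both sides are monic of degree $p$ in $R$ and vanish on $\F_p$), combined with the Frobenius identity $R^p\equiv\sum_j b_jy_j^p\pmod p$ where $y_j=|x_j|$. Lifting to an integer polynomial identity in $\Z[y_1,\dots,y_n]$ gives
\[
R(R-1)\cdots(R-p+1)=\sum_j b_j(y_j^p-y_j)+pA(y)
\]
for some $A\in\Z[y]$ of total degree $\le p$. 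Dividing by $p!$, applying Wilson's theorem $(p-1)!\equiv-1\pmod p$, and observing that $(y_j^p-y_j)/p$ reduces mod $p$ to a polynomial of degree $\le p-1$ in $y_j$ (since it is an integer-valued univariate function on $\F_p$), one obtains the desired degree $\le p$ classical polynomial representation of $r_1$, closing the argument.
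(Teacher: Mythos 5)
Your proof is correct and reaches the same final bound $\epsilon=\epsilon_0/\sqrt p$, but by a genuinely different route. Both arguments open identically: apply the Tao--Ziegler inverse theorem to produce a non-classical $P\in\Poly_{\leqslant p}(V\to\R/\Z)$ with correlation $\ge\epsilon_0$, then use \cref{poly-basis} to write $P=\alpha+Q/p+R/p^2$ with $Q$ classical of degree $\le p$ and $R(x)=\sum_jb_j|x_j|$. The paper then restricts to the hyperplane $\{\sum_j b_jx_j=0\}$, on which the depth-two term becomes a multiple of $1/p$, changes coordinates so this hyperplane is $\{x_1=0\}$ (checking that the depth-two part then concentrates on $x_1$ alone), and extracts a classical phase by Cauchy--Schwarz and Parseval over the single variable $x_1$. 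You instead stay in the original coordinates, split $e^{-2\pi iR/p^2}$ via the first two base-$p$ digits $r_0,r_1$ of $R$, Fourier-expand the leftover $e^{-2\pi ir_0/p^2}$ at an $\ell^1$ cost of $\sqrt p$, and supply the additional fact that the carry digit $r_1$ is itself a classical polynomial of degree $\le p$. Your Lucas/Wilson/Frobenius derivation of that fact is correct; as a remark, it can also be read off directly from the structure theory: $|r_1(x)|/p\equiv\sum_jb_j|x_j|/p^2-|r_0(x)|/p^2\pmod 1$ is a difference of two non-classical polynomials of degree $\le p$ (the second because $t\mapsto|t|/p^2$ has degree $p$ on $\F_p$ and precomposing with a linear form preserves degree bounds) taking values in $\U_1$, so $r_1\in\Poly_{\leqslant p}(\F_p^n\to\F_p)$. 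In short, the paper's hyperplane trick buys you not having to analyze the carry function, while your decomposition buys you not having to change basis and re-derive the normal form in the new coordinates; both are valid, and the quantitative outcome is the same.
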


Second, we give an example showing that non-classical polynomials are necessary in the $U^k$-inverse theorem for all $k\geq p+2$.

\begin{theorem}\label{thm:main}
Fix a prime $p$ and an integer $k\geq p+2$. For all $n$, there exists a function $f\colon\F_p^n\to \C$ satisfying $\|f\|_{\infty}\leq 1$ and $\|f\|_{U^k}=1$ such that for all (classical) polynomials $P\in\on{Poly}_{\leqslant k-1}(\F_p^n\to\F_p)$,
\[|\mb{E}_{x\in \F_p^n} f(x)e_p(-P(x))|=o_{p,k;n\to\infty}(1).\]
\end{theorem}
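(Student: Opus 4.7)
The natural strategy is to exhibit $f$ as a non-classical polynomial phase of degree exactly $k-1$ and depth $\geq 1$. Since $k \geq p+2$, the monomial
\[
T(y) \;=\; \tilde y_1 \tilde y_2 \cdots \tilde y_{k-p}/p^2 \pmod{1},
\]
where $\tilde y_i \in \{0,1,\ldots,p-1\}$ denotes the standard lift of $y_i \in \F_p$, is (by the Tao--Ziegler classification) a non-classical polynomial of degree $(k-p)+(p-1)=k-1$ and depth $1$. To obtain a construction that works for arbitrarily large $n$, I would use a tensor product: setting $g(y) = e^{2\pi i T(y)}$ on $\F_p^{k-p}$, partitioning $n = N(k-p)$ coordinates into $N$ blocks $x^{(1)},\ldots,x^{(N)}$, define
\[
f(x) = \prod_{j=1}^N g(x^{(j)}).
\]
Then $\|f\|_\infty = 1$, and since the Gowers $U^k$-norm tensorizes across direct products, $\|f\|_{U^k} = \|g\|_{U^k}^N = 1$.

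For the non-correlation claim, let $R$ be a classical polynomial of degree $\leq k-1$ and set $F = f e_p(-R) = e^{2\pi i (P - R/p)}$ where $P(x) = \sum_j T(x^{(j)})$. The key observation is that $P - R/p$ has degree exactly $k - 1$: the leading depth-$1$ monomials of $P$ lie in $\frac{1}{p^2}\Z/\Z$ and cannot be cancelled by any term of $R/p \in \frac{1}{p}\Z/\Z$. The plan is to iterate the Cauchy--Schwarz bound $|\E_x G|^2 \leq \E_h |\E_x \Delta_h G|$ a total of $k-2$ times to $F$. After $k-2$ derivatives, $\Delta^{k-2}(P - R/p)$ is a non-classical polynomial of $x$-degree $\leq 1$, hence of the form $\alpha(h) \cdot x/p + \beta(h)$ with $\alpha(h) \in \F_p^n$ (the only degree-$1$ non-classical polynomials on $\F_p^n$ are classical linear plus non-classical constants). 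The inner $x$-average collapses to $e^{2\pi i \beta(h)} \mbm{1}[\alpha(h) = 0]$, giving
\[
|\E_x f e_p(-R)|^{2^{k-2}} \;\leq\; \Pr_{h_1,\ldots,h_{k-2}}\!\big[\alpha_P(h) = \alpha_{R/p}(h)\big],
\]
where $\alpha_P, \alpha_{R/p}$ are the ``linear-in-$x$'' coefficients in $\Delta^{k-2}P$ and $\Delta^{k-2}(R/p)$ respectively. The claim reduces to showing this probability is $o_{N \to \infty}(1)$ uniformly in $R$.

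The separable structure of $P$ forces $\alpha_P = (\alpha_{P,1}(h^{(1)}),\ldots,\alpha_{P,N}(h^{(N)}))$ to split across blocks, while $\alpha_{R/p}$ may couple blocks through cross-block monomials of $R$. In the separable case $R = \sum_j R_j(x^{(j)})$, the probability factors as $\prod_j \Pr[\alpha_{P,j}(h^{(j)}) = \alpha_{R_j/p}(h^{(j)})]$; a direct single-block computation shows each factor is at most some $c < 1$ because $\alpha_{P,j}$ is built out of the non-polynomial lifts $\tilde\cdot$ appearing in $T$, whereas $\alpha_{R_j/p}$ is a classical polynomial in $h^{(j)}$, so the two must differ on a positive fraction of inputs. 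The main obstacle is the non-separable case: here the plan is to condition on $h^{(j')}$ for $j' \neq j$, reducing each per-block constraint to a polynomial equation in $h^{(j)}$ alone, and then argue via a chaining or rank-style argument that the per-block non-degeneracy of $\alpha_{P,j}$ survives adversarial coupling on all but a negligible set of conditionings --- using crucially that each cross-block monomial of $R$ has degree $\geq 2$ and total degree $\leq k-1$, so it touches at most $k-1$ blocks and contributes only a ``local'' perturbation. Ruling out choices of $R$ whose cross-block terms conspire with $\alpha_{P,j}$ for many $j$ simultaneously is the delicate step, and is the heart of the proof.
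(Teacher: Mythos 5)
Your proposal takes a genuinely different route from the paper, both in the choice of function and in the reduction. The paper uses the symmetric phase $f_n^{(k-1)}(x)=\tfrac{1}{p^{\ell+1}}\sum_i|x_i|^r$, and its key structural move is the Ramsey-theoretic symmetrization of Alon--Beigel (\cref{thm:quasisymmetric-restriction}): restrict to a large coordinate subset on which the top-degree part of the classical competitor $R$ is forced to be quasisymmetric. After that restriction, an explicit derivative computation (\cref{lem:multiaffine-form}) shows the leading multiaffine coefficient of the classical contribution equals $(-1)^{s-1}\alpha_1(k-2)!$, which vanishes in $\F_p$ since $k-2\ge p$, whereas the non-classical phase contributes the nonzero $(-1)^\ell r!$. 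That collapse is what makes the per-coordinate constraint uniformly non-degenerate and lets the union bound over the auxiliary values $\tau_\beta(h_I)$ close the argument.

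Your plan instead tensors a monomial-type non-classical phase $T$ across $N$ blocks and iterates Cauchy--Schwarz directly, arriving at the bound $|\E_x f\,e_p(-R)|^{2^{k-2}}\le\Pr_h[\alpha_P(h)=\alpha_{R/p}(h)]$. That reduction is sound, and the separable case is plausibly manageable since each block has $O_{p,k}(1)$ coordinates, so only finitely many degree-$(k-1)$ within-block polynomials occur. However, the non-separable case --- which you yourself flag as ``the delicate step, and \dots the heart of the proof'' --- is left entirely unproved, and this is a genuine gap rather than a detail to be filled in. A priori, an adversarial $R$ could use cross-block monomials so that, after conditioning on $h^{(j')}$ for $j'\ne j$, the block-$j$ constraint degenerates (or even tracks $\alpha_{P,j}$ exactly) for a non-negligible set of conditionings; the ``chaining or rank-style argument'' you gesture at does not rule this out, and it is not clear how to make the ``local perturbation'' heuristic quantitative without some structural control on $R$. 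That control is precisely what the paper manufactures with its Ramsey step; without something playing its role, your proposal establishes the construction and the reduction to a probability estimate, but not the probability estimate itself, so it does not prove the theorem.
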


Our example is fairly simple to write down. For $k\geq p+2$, we write $k-1=r+(p-1)\ell$ where $\ell\geq 1$ and $0<r<p$. Then our function is
\[f(x)=e^{{2\pi i}\frac{\sum_{i=1}^n|x_i|^r}{p^{\ell+1}}}\]
(where $|\cdot|\colon\F_p\to\{0,\ldots,p-1\}$ is the standard map). Note that this function $f$ is a non-classical polynomial phase function of degree $k-1$, so the content of this result is that it does not correlate with any classical polynomial phase functions of the same degree.

The $o(1)$ correlation in \cref{thm:main} is fairly bad -- the inverse of many iterated logarithms. This is due to our use of a Ramsey-theoretic argument inspired by a similar argument of Alon and Beigel. We conjecture that this bound on the correlation can be improved.

\begin{conjecture}\label{conj:exp-bounds}
Fix a prime $p$ and an integer $k\geq p+2$. For all $n$ there exist $f\colon\F_p^n\to\C$ satisfying $\|f\|_{\infty}\leq 1$ and $\|f\|_{U^k}\geq c_{p,k}>0$ such that for all (classical) polynomials $P\in\on{Poly}_{\leqslant k-1}(\F_p^n\to\F_p)$,
\[|\mb{E}_{x\in \F_p^n} f(x)e_p(-P(x))|\leq \exp(-\Omega_{p,k}(n)).\]
\end{conjecture}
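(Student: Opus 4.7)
The plan is to retain the function $f$ from \cref{thm:main} and prove the exponential bound via a Gowers--Cauchy--Schwarz reduction to a single bias estimate on $(\F_p^n)^{k-1}$, which is then attacked using a tensor/partition-rank argument that exploits the tensor-product structure of $f$. Writing $f = e^{2\pi i \phi}$ with $\phi(x) = \sum_{i=1}^n |x_i|^r/p^{\ell+1}$, and fixing $P \in \on{Poly}_{\leqslant k-1}(\F_p^n \to \F_p)$, the combined phase $\psi := \phi - P/p$ is a non-classical polynomial of degree $\leq k-1$, so its $(k-1)$-fold additive derivative $M(h_1,\dots,h_{k-1}) := \Delta_{h_1}\cdots\Delta_{h_{k-1}}\psi$ is independent of $x$. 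Gowers monotonicity $\|\cdot\|_{U^1} \leq \|\cdot\|_{U^{k-1}}$ applied to $F = e^{2\pi i \psi}$ yields
\[\bigl|\E_{x\in\F_p^n} f(x)\,e_p(-P(x))\bigr|^{2^{k-1}} \;\leq\; \|F\|_{U^{k-1}}^{2^{k-1}} \;=\; \E_{h_1,\dots,h_{k-1}} e^{2\pi i M(h_1,\dots,h_{k-1})},\]
so it suffices to bound the right-hand side by $\exp(-\Omega_{p,k}(n))$ uniformly in $P$.

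Decompose $M = M_\phi - M_P/p \pmod 1$. The tensor-product form of $\phi$ forces $M_\phi(h) = \sum_{i=1}^n \mu(h_{1,i},\dots,h_{k-1,i})$ for a specific symbol $\mu\colon \F_p^{k-1}\to\R/\Z$ coming from the non-classical polynomial $|t|^r/p^{\ell+1}$; a direct iterated-difference calculation gives $\mu(1,\dots,1) \neq 0 \pmod 1$, which is exactly the condition that makes $\phi$ genuinely non-classical of degree $k-1$. By contrast, for any classical polynomial $P$ of degree $\leq k-1$ the single-coordinate restriction $t \mapsto P(te_i)$ is a polynomial of degree $\leq p-1$, so $\Delta_{e_i}^{k-1}P = 0$ whenever $k-1 \geq p$; this holds since $k \geq p+2$. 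Consequently $M_P$ is \emph{off-diagonal}, i.e., the coefficient of $\prod_{j=1}^{k-1} h_{j,i}$ in $M_P$ vanishes for every $i$. Hence the diagonal coefficients of $M$ coincide with those of $M_\phi$ and are all nonzero, independently of $P$.

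Multiplying by $p^{\ell+1}$ turns $p^{\ell+1}M \pmod{p^{\ell+1}}$ into a polynomial of degree $\leq k-1$ on $(\F_p^n)^{k-1}$ valued in $\Z/p^{\ell+1}\Z$ whose $n$ diagonal coefficients are all nonzero. The plan is to show that its partition rank is $\Omega_{p,k}(n)$ and then invoke a bias--rank dichotomy in the style of Bhowmick--Lovett (or the quantitatively sharper Janzer and Mili\'cevi\'c bounds, suitably extended to non-classical polynomials over $\Z/p^{\ell+1}\Z$) to conclude that the bias is at most $\exp(-\Omega_{p,k}(n))$. Dividing by $2^{k-1}$ in the exponent then produces the exponential correlation bound required by \cref{conj:exp-bounds}.

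The main obstacle is the partition-rank lower bound: one must show that every symmetric polynomial of degree $\leq k-1$ on $(\F_p^n)^{k-1}$ whose $n$ diagonal coefficients are \emph{all} nonzero in $\Z/p^{\ell+1}\Z$ has partition rank $\Omega(n)$, uniformly over the adversarial off-diagonal perturbation $M_P$. For $(p,k-1)=(2,3)$ this is essentially the Tao--Sawin slice-rank bound for the diagonal $3$-tensor underlying the cap-set theorem; for general $p$ and $k-1$ one would appeal to the higher-order slice/partition-rank extensions of Naslund and Sawin. The chief difficulty is that a symmetric off-diagonal tensor $M_P$ cannot cancel the diagonal of $M_\phi$ yet could in principle lower the rank of the sum, and ruling this out quantitatively--then pairing the resulting rank lower bound with a bias--rank theorem valid for polynomials over $\Z/p^{\ell+1}\Z$--is the crux of the conjecture.
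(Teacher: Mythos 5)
The statement you are proving is \cref{conj:exp-bounds}, which the paper explicitly leaves open: the authors only establish the qualitative $o(1)$ bound of \cref{thm:main}, precisely because they do not know how to carry out the quantitative step your proposal hinges on. Your reduction is sound and in fact mirrors the paper's own strategy for \cref{thm:main}: the same function $f$, the same Gowers monotonicity step to pass to the bias of the iterated derivative $M = M_\phi - M_P/p$, and the same key observation that the diagonal coefficients of $M_P$ vanish because $k-1\ge p$ (the paper derives this from the factor $(k-1)!$ in \cref{lem:multiaffine-form}; your argument via restricting $P$ to lines $t\mapsto P(x+te_i)$ is a cleaner route to the same fact). But everything after that is a plan, not a proof, and you acknowledge as much: the claim that a diagonal tensor with nonzero entries plus an adversarial symmetric off-diagonal perturbation has partition rank $\Omega(n)$ is simply not available. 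Nonzero diagonal entries alone never force high rank --- the all-ones tensor is diagonal-plus-off-diagonal and has slice rank $1$ and bias bounded away from $0$ --- so one must genuinely use the constraint that the perturbation is $\Delta_{h_1}\cdots\Delta_{h_{k-1}}P$ for a classical $P$ of degree $\le k-1$, and no such rank (or bias) lower bound robust to this class of perturbations is known. Subadditivity of analytic rank does not rescue the argument either: $\mathrm{arank}(M)\ge \mathrm{arank}(M_\phi)-\mathrm{arank}(M_P)$, but $\mathrm{arank}(M_\phi)$ is only $cn$ for a small constant $c<1$ while $\mathrm{arank}(M_P)$ can a priori be as large as $\sim n$. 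Finally, even granting a partition-rank lower bound, converting it to an $\exp(-\Omega(n))$ bias bound requires a linear (not merely polynomial) relation between partition rank and analytic rank in fixed characteristic, a further nontrivial input.

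For contrast, the paper sidesteps the rank question entirely: \cref{thm:quasisymmetric-restriction} uses hypergraph Ramsey to restrict to a subset of coordinates on which the degree-$(k-1)$ part of $P$ is quasisymmetric, and then \cref{lem:multiaffine-form} together with a coordinate-by-coordinate revealing argument and a union bound over the auxiliary quantities $\tau_\beta(h_I)$ decouples the $m$ coordinate events, giving a bound of $(1-c_{p,k})^m$ with $m$ only of Ramsey (iterated-logarithmic) size. Your approach, if the rank step could be completed, would indeed upgrade this to the exponential bound --- but that step is exactly the content of the conjecture, so the proposal as written has a genuine, unfilled gap at its core.
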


In fact, we believe that this conjecture is true with the same functions that we use to prove \cref{thm:main}.

\medskip

\textbf{Structure of the paper:} In \cref{sec:background} we give the definition of the Gowers $U^k$-norm and of non-classical polynomials. In \cref{sec:p+1-inverse} we prove \cref{thm:p+1-inverse}. We prove \cref{thm:main} in the remainder of the paper. \cref{sec:symmetrization} develops the symmetrization tool that we use and \cref{sec:non-classical} gives the full proof.

\medskip

\textbf{Notation:} We use $|\cdot|$ for the standard map $\mb{F}_p\to\{0,\ldots,p-1\}$. We often treat $\F_p$ as an additive subgroup of $\R/\Z$ via the map $x\mapsto |x|/p$ and, by some abuse of notation, freely switch between these two viewpoints. We use $e\colon\R/\Z\to\C$ for the function $e(x)=e^{2\pi i x}$ and $e_p\colon\F_p\to\C$ for the function $e_p(x)=e^{2\pi i |x|/p}$.

\section{Background on non-classical polynomials}
\label{sec:background}

\begin{definition}
Fix a prime $p$, a finite-dimensional $\mb{F}_p$-vector space $V$, and an abelian group $G$. Given a function $f\colon V\to G$ and a shift $h\in V$, define the \textbf{additive derivative} $\Delta_hf\colon V\to G$ by \[(\Delta_hf)(x)=f(x+h)-f(x).\]
Given a function $f\colon V\to\mb{C}$ and a shift $h\in V$, define the \textbf{multiplicative derivative} $\partial_hf\colon V\to\mb{C}$ by \[(\partial_hf)(x)=f(x+h)\overline{f(x)}.\]
\end{definition}

\begin{definition}
Fix a prime $p$ and a finite-dimensional $\mb{F}_p$-vector space $V$. Given a function $f\colon V\to\mb{C}$ and $d\ge 1$, the {\textbf{Gowers uniformity norm}} $\|f\|_{U^d}$ is defined by \[\|f\|_{U^d}=|\mb{E}_{x,h_1,\ldots,h_d\in V}(\partial_{h_1}\cdots\partial_{h_d}f)(x)|^{1/2^d}.\]
\end{definition}

See \cite[Lemma B.1]{TZ12} for some basic facts about the Gowers uniformity norms. 

\begin{definition}
Fix a prime $p$, and a non-negative integer $d\ge0$. Let $V$ be a finite-dimensional $\mb{F}_p$-vector space. A \textbf{non-classical polynomial} of degree at most $d$ is a map $P\colon V\to\mb{R}/\mb{Z}$ that satisfies \[(\Delta_{h_1}\cdots \Delta_{h_{d+1}}P)(x)= 0\] for all $h_1,\ldots,h_{d+1},x\in V$.
We write $\on{Poly}_{\leqslant d}(V\to\mb{R}/\mb{Z})$ for the set of non-classical polynomials of degree at most $d$.

A classical polynomial is a map $V\to\mb{F}_p$ satisfying the same property. By composing with the standard map $x\mapsto |x|/p$ we can view $\on{Poly}_{\leqslant d}(V\to\mb{F}_p)$ as a subset of $\on{Poly}_{\leqslant d}(V\to\mb{R}/\mb{Z})$.
\end{definition}

See \cite[Lemma 1.7]{TZ12} for some properties of non-classical polynomials. We give one property below which will be used several times in this paper.

\begin{lemma}[{\cite[Lemma 1.7(iii)]{TZ12}}]
\label{poly-basis}
Fix a prime $p$ and a finite-dimensional $\mb{F}_p$-vector space $V=\mb{F}_p^n$. Then $P\colon V\to\mb{R}/\mb{Z}$ is a non-classical polynomial of degree at most $d$ if and only if it can be expressed in the form
\[P(x_1,\ldots,x_n)=\alpha+\sum_{\genfrac{}{}{0pt}{}{0\le i_1,\ldots,i_n<p,\, j\ge 0:}{0<i_1+\cdots+i_n\le d-j(p-1)}}\frac{c_{i_1,\ldots,i_n,j}|x_1|^{i_1}\cdots|x_n|^{i_n}}{p^{j+1}}\Mod 1,\]
for some $\alpha\in\mb{R}/\mb{Z}$ and coefficients $c_{i_1,\ldots,i_n,j}\in\{0,\ldots,p-1\}$.
Furthermore, this representation is unique.
\end{lemma}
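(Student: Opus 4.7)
The plan is to prove the biconditional by treating sufficiency and necessity separately, then establishing uniqueness.

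\textbf{Sufficiency.} I would induct on the depth $j$ to show that each monomial $\frac{|x_1|^{i_1}\cdots|x_n|^{i_n}}{p^{j+1}}\pmod{1}$ is a non-classical polynomial of degree at most $i_1+\cdots+i_n+j(p-1)$. The base case $j=0$ is immediate since $|x_1|^{i_1}\cdots|x_n|^{i_n}/p$ is a classical polynomial of degree $i_1+\cdots+i_n$. For the inductive step, the key single-variable identity is the carry formula
\[
\Delta_h\!\left(\frac{|x|}{p^{j+1}}\right) \equiv \frac{|h|}{p^{j+1}} - \frac{1}{p^j}\,\mathbf{1}[|x|+|h|\ge p]\pmod{1},
\]
where the carry indicator is, for each fixed $h$, a classical polynomial in $|x|$ of degree at most $p-1$. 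A discrete Leibniz rule $\Delta_h(fg)=(\Delta_hf)\,g+f\,\Delta_hg+\Delta_hf\,\Delta_hg$ extends this to products, and careful bookkeeping confirms that taking one additive derivative reduces the depth $j$ by one while consuming only $p-1$ of the available classical degree budget.

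\textbf{Necessity.} I would induct on $d$; the base $d=0$ is the elementary fact that $\Delta_hP\equiv 0$ for every $h$ forces $P$ to be constant. For the inductive step, each discrete partial derivative $\Delta_{e_k}P$ lies in $\on{Poly}_{\le d-1}(V\to\R/\Z)$ and, by the inductive hypothesis, admits a representation of the claimed form. Matching these partial representations across $k=1,\dots,n$ and solving the resulting compatibility system yields a candidate monomial representation $\widetilde{P}$ of $P$; the difference $P-\widetilde{P}$ has all discrete partial derivatives equal to zero, is therefore constant, and this constant is absorbed into $\alpha$. A slicker alternative is to observe that sufficiency embeds the parameter set into $\on{Poly}_{\le d}$, and match cardinalities by a parallel induction on $d$.

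\textbf{Uniqueness.} Suppose $\alpha+\sum_{(\mathbf{i},j)}\frac{c_{\mathbf{i},j}|x_1|^{i_1}\cdots|x_n|^{i_n}}{p^{j+1}}\equiv 0\pmod{1}$. Let $J$ be the largest $j$ with some $c_{\mathbf{i},J}\ne 0$. Multiplying through by $p^{J+1}$ and reducing modulo $p$ yields a nontrivial $\F_p$-linear relation among the monomials $|x_1|^{i_1}\cdots|x_n|^{i_n}$ with $0\le i_k<p$, contradicting their standard linear independence as functions $\F_p^n\to\F_p$. Stripping the depth-$J$ terms and iterating downward in $J$ forces all $c_{\mathbf{i},j}$, and then $\alpha$, to vanish.

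\textbf{Main obstacle.} The delicate piece is the sufficiency bookkeeping, where the interaction between classical degree and depth across products requires two carefully interleaved inductions: the Leibniz expansion in several variables produces many cross terms, and one must simultaneously track depth and degree contributions from each. Tao--Ziegler's proof in \cite[Lemma 1.7]{TZ12} threads this needle with a clean inductive setup, which I would follow.
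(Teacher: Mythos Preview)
The paper does not give its own proof of this lemma: it is stated as a citation of \cite[Lemma~1.7(iii)]{TZ12} and used as a black box. There is therefore no proof in the paper against which to compare your attempt. Your sketch is a reasonable outline of the Tao--Ziegler argument (and you say as much), and the three pieces --- sufficiency via the carry identity and Leibniz, necessity by induction on $d$ or by counting, and uniqueness by clearing denominators and invoking linear independence of reduced monomials over $\F_p$ --- are all sound in outline. Since the paper treats this as an imported result, there is nothing further to say by way of comparison.
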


Define $\U_k\subset\R/\Z$ to be $\{0,1/p^k,\ldots,(p^k-1)/p^k\}$.
As a corollary we see that in characteristic $p$, every non-classical polynomial of degree at most $d$ takes values in a coset of $\mb{U}_{\lfloor(d-1)/(p-1)\rfloor+1}$.

Finally we state the inverse theorem of Tao and Ziegler.

\begin{theorem}[{\cite[Theorem 1.10]{TZ12}}]
\label{thm:inverse}
Fix a prime $p$, a positive integer $k$, and a parameter $\delta>0$. There exists $\epsilon>0$ such that the following holds. Let $V$ be a finite-dimensional $\mb{F}_p$-vector space. Given a function $f\colon V\to\mb{C}$ satisfying $\|f\|_\infty\le1$ and $\|f\|_{U^k}>\delta$, there exists a non-classical polynomial $P\in\on{Poly}_{\leqslant k-1}(V\to\mb{R}/\mb{Z})$ such that \[|\mb{E}_{x\in V} f(x)e^{-2\pi i P(x)}|\ge\epsilon.\]
\end{theorem}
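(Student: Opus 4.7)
The plan is to argue by induction on $k$. The base case $k=1$ is immediate since $\|f\|_{U^1} = |\mathbb{E}_x f(x)|$, so $f$ already correlates with the constant polynomial $P\equiv 0$.

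For the inductive step, fix $f$ with $\|f\|_\infty \le 1$ and $\|f\|_{U^k} > \delta$. I would use the standard identity
\[
\|f\|_{U^k}^{2^k} = \mathbb{E}_{h \in V}\, \|\partial_h f\|_{U^{k-1}}^{2^{k-1}}
\]
together with the pointwise bound $\|\partial_h f\|_{U^{k-1}} \le 1$ to conclude that for a set of shifts $h \in V$ of density at least $\delta^{2^k}/2$, the derivative $\partial_h f$ has $U^{k-1}$-norm bounded below by some $\delta' = \delta'(\delta, p, k) > 0$. By the inductive hypothesis, each such $h$ gives a non-classical polynomial $Q_h \in \on{Poly}_{\leqslant k-2}(V \to \mathbb{R}/\mathbb{Z})$ with $|\mathbb{E}_x\, \partial_h f(x) e(-Q_h(x))| \ge \epsilon'$.

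The crux is to upgrade the family $\{Q_h\}$ into a single non-classical polynomial $P$ of degree at most $k-1$ such that $\partial_h P$ matches $Q_h$ on a large set of $h$. I would carry this out in three stages. \textbf{Cocycle:} an iterated Cauchy-Schwarz argument shows that for many additive quadruples $h_1+h_2 = h_3+h_4$, the functions $Q_{h_1}(x+h_2)+Q_{h_2}(x)$ and $Q_{h_3}(x+h_4)+Q_{h_4}(x)$ agree modulo a polynomial of strictly lower degree, i.e., the map $h \mapsto Q_h$ is an approximate cocycle in degree $k-2$. \textbf{Symmetrization:} working modulo the subspace of lower-degree polynomials, one chooses canonical representatives of $Q_h$ so that the map $h\mapsto Q_h$ satisfies the symmetry that would hold if it were genuinely $\partial_h P$ for some $P$. \textbf{Integration:} along a basis of $V$, one assembles $P$ as a telescoping sum over the $Q_h$ and verifies $\partial_h P \equiv Q_h$ holds often enough; then $f\cdot e(-P)$ has non-negligible $U^1$-norm, yielding the desired correlation.

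I expect the main obstacle to be the symmetrization stage. In characteristic $p \le k-1$ there is an algebraic obstruction (coming from the fact that $\binom{x}{p^j}$-type expressions do not reduce to classical polynomials) which prevents $Q_h$ from being the derivative of any classical polynomial; this is precisely why the theorem must allow $P$ to take values in $\mathbb{R}/\mathbb{Z}$ rather than in $\mathbb{F}_p$, and explains the appearance of denominators $p^{j+1}$ in \cref{poly-basis}. A rigorous implementation requires the equidistribution theory for polynomial factors developed by Green-Tao and refined by Tao-Ziegler, which is needed to control joint distributions of the polynomial phases $e(-Q_h(x))$ well enough to execute the cocycle, symmetrization, and integration steps in a quantitative way.
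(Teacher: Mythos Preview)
The paper does not prove this theorem; it is quoted as a black box from Tao--Ziegler \cite[Theorem~1.10]{TZ12} and then applied in the proof of \cref{thm:p+1-inverse}. So there is no ``paper's own proof'' to compare against.

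As for your sketch on its own merits: the opening moves (induction on $k$, the identity $\|f\|_{U^k}^{2^k}=\E_h\|\partial_h f\|_{U^{k-1}}^{2^{k-1}}$, and extracting a dense set of $h$ with a correlating $Q_h$) are standard and correct. The trouble is that the three-stage ``cocycle / symmetrization / integration'' plan, as written, is not a proof but a wish list. Even in high characteristic the passage from ``$h\mapsto Q_h$ is an approximate cocycle modulo lower degree'' to ``there exists $P$ with $\Delta_h P\approx Q_h$'' is the entire content of the theorem, and in low characteristic this integration step is strictly harder: one must show that the obstruction to integrating to a \emph{classical} polynomial can always be absorbed by passing to a non-classical one, which is exactly what \cite{TZ12} spends most of its length establishing (via an ergodic-theoretic structure theorem for the Host--Kra factors of $\F_p^\omega$, not via a direct finitary symmetrization). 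Your final paragraph correctly identifies where the difficulty lies, but ``a rigorous implementation requires the equidistribution theory \ldots'' is an acknowledgment that the argument has not actually been given. If you want to supply a self-contained finitary proof along these lines you would need, at minimum, a quantitative regularity lemma for non-classical polynomial factors and a cohomological lemma saying that approximate cocycles of degree $k-2$ are close to exact ones; neither is short.
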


Earlier works of Bergelson, Tao, and Ziegler \cite{BTZ10} and Tao and Ziegler \cite{TZ10} show this result in the high-characteristic regime $p\geq k$, with the additional guarantee that $P$ is a classical polynomial of degree at most $k-1$.

\section{Classical polynomials for the \texorpdfstring{$U^{p+1}$}{Up+1}-inverse theorem}
\label{sec:p+1-inverse}

The inverse theorem for the $U^k$-norm does not require non-classical polynomials when $p\geq k$ for the simple reason that every non-classical polynomial of degree at most $p-1$ is a classical polynomial of the same degree (up to a constant shift). To prove \cref{thm:p+1-inverse}, about the $U^{p+1}$-inverse theorem, we use the following fact. Every non-classical polynomial of degree $p$ agrees with a classical polynomial on a codimension 1 hyperplane (up to a constant shift).

\begin{proposition}
\label{thm:p+1-hyperplane}
Let $P\in\on{Poly}_{\leqslant p}(V\to\mb{R}/\mb{Z})$ be a non-classical polynomial of degree at most $p$. Then there exists a codimension 1 hyperplane $U\le V$, a classical polynomial $Q\in\Poly_{\leqslant p}(V\to\F_p)$, and $\alpha\in\R/\Z$ such that $P(x)=\alpha+|Q(x)|/p$ for all $x\in U$.
\end{proposition}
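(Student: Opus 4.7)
\medskip
\emph{Proof proposal.} The plan is to observe that for any non-classical polynomial $P$ of degree $\le p$, multiplying by $p$ annihilates everything except an $\F_p$-linear form, so restricting to the zero set of this form renders $pP$ constant; $P$ itself then automatically takes values in a coset of $\U_1$ on that hyperplane, where it must reduce to a classical polynomial.

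From \cref{poly-basis}, the depth $j \ge 1$ terms in the basis representation of a degree-$\le p$ non-classical polynomial satisfy $0 < i_1 + \cdots + i_n \le p - j(p-1)$, which forces $j = 1$ and $i_1 + \cdots + i_n = 1$. Hence
\[
P(x) = \alpha_0 + \frac{|P_{cl}(x)|}{p} + \sum_{i=1}^{n} \frac{c_i |x_i|}{p^2} \pmod 1
\]
for some classical $P_{cl} \in \Poly_{\le p}(V \to \F_p)$, coefficients $c_1,\ldots,c_n \in \{0,\ldots,p-1\}$, and $\alpha_0 \in \R/\Z$. Multiplying by $p$ kills the classical part and gives $pP(x) \equiv p\alpha_0 + |L(x)|/p \pmod 1$ where $L(x) := \sum_i c_i x_i \in \F_p$.

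I then set $U := \ker L$, a codimension-$1$ hyperplane (if $L \equiv 0$, $P$ is already classical up to $\alpha_0$ and any codimension-$1$ subspace works). On $U$ we have $pP(x) \equiv p\alpha_0 \pmod 1$, so $P(x) - \alpha_0 \in \U_1$, and thus $P|_U - \alpha_0 = |f|/p$ in $\R/\Z$ for a unique $f\colon U \to \F_p$. An easy induction gives $\Delta_{h_1} \cdots \Delta_{h_k}(|f|/p) \equiv |\Delta_{h_1} \cdots \Delta_{h_k} f|/p \pmod 1$ with $\F_p$-arithmetic on the right. Since $P|_U$ still has degree $\le p$, the case $k = p+1$ makes the left side vanish; the right side takes values in $\{0, 1/p, \ldots, (p-1)/p\}$, so vanishing modulo $1$ forces $\Delta^{p+1} f = 0$ in $\F_p$. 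Thus $f$ is classical of degree $\le p$ on $U$, and extending $f$ to $Q \in \Poly_{\le p}(V \to \F_p)$ by composition with any $\F_p$-linear projection $V \to U$ finishes the proof.

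I don't foresee a serious obstacle: once one unfolds \cref{poly-basis} and notes that in degree $\le p$ the non-classical contribution is confined to linear terms $c_i|x_i|/p^2$, the rest is bookkeeping. The one step requiring care is the final descent from $\Delta^{p+1}(|f|/p) \equiv 0 \pmod 1$ to $\Delta^{p+1} f = 0$ in $\F_p$, which relies on the integer representative $|\,\cdot\,|$ lying in $\{0,\ldots,p-1\}$.
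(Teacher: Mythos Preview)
Your proof is correct and follows essentially the same approach as the paper: use \cref{poly-basis} to see that the only non-classical contribution in degree $\le p$ is a term $\sum_i c_i|x_i|/p^2$, restrict to the hyperplane $\sum_i c_i x_i = 0$, and observe that $P$ then takes values in a coset of $\U_1$ and is therefore classical up to a constant. Your ``multiply by $p$'' framing and the explicit verification that $\Delta^{p+1}(|f|/p)\equiv 0 \pmod 1$ forces $\Delta^{p+1}f=0$ in $\F_p$ are just a slightly more detailed rendering of the paper's last sentence (``by our identification of $\U_1$ with $\F_p$, $P|_U-\alpha$ is a classical polynomial''), which implicitly uses that $\U_1\cong\F_p$ as groups so the derivative condition transfers.
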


\begin{proof}
Pick an isomorphism $V\simeq \F_p^n$. By \cref{poly-basis}, we have \[P(x_1,\ldots,x_n)=\alpha+P'(x_1,\ldots,x_n)+\frac{c_1|x_1|+\cdots+c_n|x_n|}{p^2}\pmod{1}\] for $\alpha\in\mb{R}/\mb{Z}$, a polynomial $P'$ taking values in $\U_1=\{0,1/p,\ldots,(p-1)/p\}$, and $c_1,\ldots,c_n\in\{0,\ldots,p-1\}$. Define the codimension 1 hyperplane $U\le V$ by $c_1x_1+\cdots+c_nx_n=0$. Note that for $(x_1,\ldots,x_n)\in U$, we have $c_1|x_1|+\cdots+c_n|x_n|\equiv0\Mod p$. Thus $P|_U$ takes values in $\alpha+\U_1$. Thus by our identification of $\U_1$ with $\F_p$,  $P|_U-\alpha$ is a classical polynomial of degree at most $p$.
\end{proof}

\begin{proof}[Proof of \cref{thm:p+1-inverse}]
By the usual inverse theorem, \cref{thm:inverse}, there exists $P\in\on{Poly}_{\leqslant p}(V\to\mb{R}/\mb{Z})$ such that $|\mb{E}_{x\in V}f(x)e(-P(x))|\ge\epsilon$. By \cref{thm:p+1-hyperplane}, there exists a codimension 1 hyperplane $U$ and $\alpha\in\mb{R}/\mb{Z}$ such that $P|_U$ takes values in $\alpha+\U_1$, i.e., $P|_U-\alpha$ is classical. Pick an isomorphism $V\simeq\mb{F}_p^n$ such that $U$ is the hyperplane defined by $x_1=0$. In this basis, there exists a classical polynomial $Q\in\on{Poly}_{\leqslant p}(\mb{F}_p^n\to\mb{F}_p)$ and $c\in\{0,\ldots,p-1\}$ so that \[P(x_1,\ldots,x_n)=\alpha+\frac{|Q(x_1,\ldots,x_n)|}p+\frac{c|x_1|}{p^2}\pmod{1}.\] Thus we have
\begin{align*}
\epsilon
&\le|\mb{E}_{x\in\mb{F}_p^n}f(x)e(-P(x))|\\
&\le\mb{E}_{x_1\in\mb{F}_p}\bigg|\mb{E}_{y\in\mb{F}_p^{n-1}}f(x_1,y)e(-P(x_1,y))\bigg|\\
&=\mb{E}_{x_1\in\mb{F}_p}\bigg|\mb{E}_{y\in\mb{F}_p^{n-1}}f(x_1,y)e_p(-Q(x_1,y))\bigg|\\
&\le\bigg(\mb{E}_{x_1\in\mb{F}_p}\bigg|\mb{E}_{y\in\mb{F}_p^{n-1}}f(x_1,y)e_p(-Q(x_1,y))\bigg|^2\bigg)^{1/2}.
\end{align*}

By Parseval and the pigeonhole principle, there exists $a\in\mb{F}_p$ such that \[\bigg|\mb{E}_{x_1\in\mb{F}_p}e(-ax_1)\mb{E}_{y\in\mb{F}_p^{n-1}}f(x_1,y)e_p(-Q(x_1,y))\bigg|\ge\epsilon/\sqrt{p}.\]

Therefore $f$ has correlation at least $\epsilon/\sqrt{p}$ with the classical polynomial $Q(x_1,\ldots,x_n)+ax_1$.
\end{proof}

\section{Symmetrization techniques}\label{sec:symmetrization}

We now extend a symmetrization technique of Alon and Beigel \cite{AB01} which will be needed to prove the non-correlation property of our example. At a high level, this technique use Ramsey theory to show that if a function correlates with a bounded degree polynomial, then some restriction of coordinates correlates with a symmetric polynomial. Unfortunately, as stated this only holds for multilinear polynomials, and otherwise one can only reduce to the class of so-called quasisymmetric polynomials. These are a generalization of the notion of symmetric polynomials which have found extensive use in enumerative and algebraic combinatorics.

\begin{definition}\label{def:quasisymmetric}
For a prime $p$ and a tuple $(\alpha_1,\ldots,\alpha_s)$ of positive integers satisfying $\alpha_i<p$ for all $i$, the \textbf{elementary quasisymmetric polynomial associated to} $(\alpha_1,\ldots,\alpha_s)$ in $n$ variables is the polynomial $Q_\alpha\colon\F_p^n\to\F_p$ defined by
\[Q_\alpha(x_1,\ldots,x_n) = \sum_{1\le i_1 < \cdots < i_s\le n}\prod_{j=1}^s x_{i_j}^{\alpha_j}.\]
We additionally note the total degree is $|\alpha|:=\alpha_1+\cdots+\alpha_s$.
\end{definition}

\begin{theorem}\label{thm:quasisymmetric-restriction}
Fix a prime $p$ and an integer $d\ge 1$. For any $n$, there exists $m=\omega_{p,d;n\to\infty}(1)$ such that the following holds. Let $P(x_1,\ldots,x_n)$ be a polynomial of degree at most $d$ with coefficients in $\mb{F}_p$. There exists $I\subseteq[n]$ of size $|I|=m$ such that for any $y_{[n]\setminus I}\in\mb{F}_p^{[n]\setminus I}$, the function $P(x_I,y_{[n]\setminus I})$ (viewed as a polynomial in the $x_I$) can be written as a quasisymmetric polynomial of degree $d$ plus an arbitrary polynomial of degree at most $d-1$.
\end{theorem}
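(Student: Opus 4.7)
The plan is a finite-coloring hypergraph Ramsey argument on the coefficients of $P$, in the spirit of Alon--Beigel, adapted to accommodate arbitrary (not necessarily multilinear) degree-$d$ monomials.

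First, I reduce to a statement about the top-degree part. Since we are working over $\mathbb{F}_p$, write each monomial of $P$ in the reduced form $c\cdot x_{i_1}^{\alpha_1}\cdots x_{i_s}^{\alpha_s}$ with $i_1<\cdots<i_s$ and $\alpha_j\in\{1,\ldots,p-1\}$. When we substitute $y_{[n]\setminus I}$, only monomials whose indices lie entirely in $I$ can contribute a degree-$d$ term in $x_I$; all other monomials of degree $d$ contain a factor from $y_{[n]\setminus I}$ and so collapse into a polynomial in $x_I$ of degree $<d$. Thus the degree-$d$ part of $P(x_I,y_{[n]\setminus I})$ is independent of $y_{[n]\setminus I}$ and equals the degree-$d$ part of $P$ with all variables outside $I$ set to $0$. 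It therefore suffices to find large $I$ such that this particular polynomial is quasisymmetric.

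Next I set up the Ramsey coloring. The compositions $\alpha=(\alpha_1,\ldots,\alpha_s)$ with $|\alpha|=d$ and each $\alpha_j\in\{1,\ldots,p-1\}$ form a finite set $\mathcal{A}_d$. For each $\alpha\in\mathcal{A}_d$ of length $s$, the coefficient of $x_{i_1}^{\alpha_1}\cdots x_{i_s}^{\alpha_s}$ in $P$ defines a function $c_\alpha\colon \binom{[n]}{s}\to\mathbb{F}_p$. Grouping by length, for each $s\in\{1,\ldots,d\}$ we obtain a single coloring $\chi_s\colon\binom{[n]}{s}\to\mathbb{F}_p^{\mathcal{A}_d^{(s)}}$, where $\mathcal{A}_d^{(s)}$ is the set of length-$s$ compositions. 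The number of colors of $\chi_s$ is bounded solely in terms of $p$ and $d$.

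Now I apply the multicolor hypergraph Ramsey theorem for each $s$ from $d$ down to $1$ in turn, at each stage passing to a further subset on which $\chi_s$ is monochromatic. Since only $d$ applications are needed and each Ramsey number is finite, we obtain a subset $I\subseteq [n]$ of size $m = m(n)\to\infty$ as $n\to\infty$ (albeit with iterated-logarithmic dependence) on which every $c_\alpha$ is constant. Writing the resulting constant as $c_\alpha$ by abuse of notation, the degree-$d$ part of the restriction becomes
\[
\sum_{\alpha\in\mathcal{A}_d}c_\alpha\sum_{\substack{i_1<\cdots<i_s \\ i_j\in I}}x_{i_1}^{\alpha_1}\cdots x_{i_s}^{\alpha_s}=\sum_{\alpha\in\mathcal{A}_d}c_\alpha Q_\alpha(x_I),
\]
which by \cref{def:quasisymmetric} is a quasisymmetric polynomial of degree $d$. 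Adding back the terms of degree $<d$ in $x_I$ (which may depend arbitrarily on $y_{[n]\setminus I}$) gives the claimed decomposition.

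The only genuine subtlety is checking that the lower-order behavior is truly unconstrained: this is precisely why one must handle the coefficients only at length $s$ matching the full restriction (so that fixed $y$ values cannot spoil the symmetry), and why one restricts attention to the top-degree contribution. The use of Ramsey for each uniformity $s$ separately is natural since substitutions from $y$-variables into higher-degree monomials can produce arbitrary lower-degree noise. No stronger Ramsey bound is required; the tower-type $m=\omega_{p,d;n\to\infty}(1)$ is consistent with the iterated-logarithmic bound mentioned in the paper's discussion after \cref{thm:main}.
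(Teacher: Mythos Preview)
Your proof is correct and follows the same overall strategy as the paper: isolate the degree-$d$ part of the restriction (which is independent of $y_{[n]\setminus I}$), then use hypergraph Ramsey on the coefficients of the degree-$d$ monomials to find a large subset $I$ on which every coefficient $c_\alpha$ is constant, forcing the top-degree part to equal $\sum_\alpha c_\alpha Q_\alpha(x_I)$.

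The only difference is in how Ramsey is invoked. You apply $s$-uniform Ramsey separately for each length $s$ of the composition $\alpha$, iterating from $s=d$ down to $s=1$ and shrinking $I$ at each step. The paper instead performs a single application of $d$-uniform Ramsey by packaging all the lower-uniformity data into one coloring: for a $d$-edge $\{i_1<\cdots<i_d\}$ and each $\lambda\in\{0,\ldots,p-1\}^d$ with $|\lambda|=d$, it records the coefficient of the monomial supported on the sub-edge $\{i_k:\lambda_k\neq 0\}$ with exponent pattern given by the nonzero entries of $\lambda$. A monochromatic $d$-uniform subset is then automatically monochromatic for every lower uniformity. Both implementations yield $m=\omega_{p,d;n\to\infty}(1)$ of tower type; the paper's version is slightly slicker but not materially stronger.
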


\begin{proof}
We can uniquely express $P$ in the form
\[P(x_1,\ldots,x_n)=\sum_{s=0}^d\sum_{1\leq i_1<\cdots<i_s\leq n}\sum_{\substack{\alpha\in\{1,\ldots,p-1\}^s\\|\alpha|\leq d}}c_{i_1,\ldots,i_s,\alpha}\prod_{j=1}^s x_{i_j}^{\alpha_j}\] where the $c_{i_1,\ldots,i_s,\alpha}\in\F_p$ are arbitrary.

Define $\Lambda=\{\lambda\in\{0,\ldots,p-1\}^d:|\lambda|=d\}$. We define a coloring of the complete $d$-uniform hypergraph on $n$ vertices where the set of colors is $\F_p^{\Lambda}$. For an edge $\{i_1,\ldots,i_d\}$ with $1\leq i_1<\cdots<i_d\leq n$, let the color of this edge be given by $c_{i_1,\ldots,i_d}\colon \Lambda\to\F_p$. We define $c_{i_1,\ldots,i_d}(\lambda)=c_{j_1,\ldots,j_s,\alpha}$ where $\alpha$ is formed by removing the 0's from the tuple $\lambda$ and $(j_1,\ldots,j_s)$ is formed from $(i_1,\ldots,i_d)$ by removing the coordinate $i_k$ if $\lambda_k=0$. 

Applying the hypergraph Ramsey theorem, there exists a subset $I\subseteq[n]$ such that the induced subhypergraph on vertex set $I$ is colored monochromatically with color $c\colon\Lambda\to\F_p$ and $|I|=\omega_{p,d;n\to\infty}(1)$. Unwinding the definitions, we see that
\[P(x_{I},y_{[n]\setminus I}) = \sum_{s=0}^d\sum_{\substack{\alpha\in\{1,\ldots,p-1\}^s\\|\alpha|=d}}c(\alpha,\underbrace{0,\ldots,0}_{d-s\text{ 0's}})Q_\alpha(x_I)+ \text{mixed terms}.\]

Now the mixed terms involve at least one factor of $y_{[n]\setminus I}$, so their total $x_I$-degree is strictly smaller than $d$. 
\end{proof}

\section{Non-classical polynomials are necessary}
\label{sec:non-classical}

In this section we prove \cref{thm:main}, namely that non-classical polynomials are necessary in the $U^{k+1}$ inverse theorem when $k> p$. To do this, we use the function $f_n^{(k)}\colon\mb{F}_p^n\to\mb{R}/\mb{Z}$ defined by
\begin{equation}\label{eqn:f_k}
f_n^{(k)}(x) = \frac{1}{p^{\ell+1}}\sum_{i=1}^n|x_i|^r
\end{equation}
where $k=r+(p-1)\ell$ with $\ell\geq 1$ and $0<r<p$. Note that $f_n^{(k)}$ is a non-classical polynomial of degree $k$, so $\|e(f_n^{(k)})\|_{U^{k+1}}=1$.

In order to motivate our proof, suppose for the sake of contradiction that $f_n^{(k)}$ has correlation at least $\epsilon$ with some classical polynomial of degree at most $k$. By \cref{thm:quasisymmetric-restriction}, we will be able to reduce to the situation \[\epsilon\leq\left|\E_{x\sim\F_p^n}e(f_n^{(k)}(x)+g(x)+h(x))\right|\] where $g$ is a homogeneous quasisymmetric polynomial of degree $k$ and $h$ is a classical polynomial of degree at most $k-1$. By the monotonicity of the Gowers norms (alternatively by the Gowers-Cauchy-Schwarz inequality), we deduce
\begin{align*}
\epsilon^{2^k}
&\leq\left|\E_{x}e(f_n^{(k)}(x)+g(x)+h(x))\right|^{2^k}\\
&=\E_{x,h_1,\ldots,h_k}(\partial_{h_1}\cdots\partial_{h_k}e(f_n^{(k)}+g+h))(x)\\
&=\E_{h_1,\ldots,h_k}e(\Delta_{h_1}\cdots\Delta_{h_k}(f_n^{(k)}+g)).
\end{align*}
Since $f_n^{(k)}$ and $g$ are polynomials of degree $k$, the iterated derivatives $(\Delta_{h_1}\cdots\Delta_{h_k}f_n^{(k)})(x)$ and $(\Delta_{h_1}\cdots\Delta_{h_k}g)(x)$ are constants independent of $x$. Furthermore, they take values in $\U_1$ which (with some abuse of notation) we identify with $\F_p$. Many results on these objects are known, including the fact that in general they are multilinear functions of $h_1,\ldots, h_k$ (see \cite[Section 4]{TZ12}). For the purposes of this paper, it is sufficient to do the following explicit computation.

\begin{lemma}
\label{lem:derivative-computation}
For $k=r+(p-1)\ell$ with $\ell\geq 0$ and $0<r<p$,
\[\iota_k(h_1,\ldots,h_k):=\Delta_{h_1}\cdots\Delta_{h_k}f_n^{(k)}=(-1)^\ell r!\sum_{i=1}^n(h_1)_i\cdots(h_k)_i,\]
and for $\alpha=(\alpha_1,\ldots,\alpha_s)$ with $\alpha_1+\cdots+\alpha_s=k$ and $0\leq \alpha_i<p$,
\[\tau_\alpha(h_1,\ldots,h_k):=\Delta_{h_1}\cdots\Delta_{h_k}Q_\alpha=\sum_{\pi\in\mathfrak S_k}\sum_{\vec \imath}(h_1)_{i_{\pi(1)}}\cdots(h_k)_{i_{\pi(k)}}\]where the sum is over sequences $1\leq i_1\leq\cdots\leq i_k\leq n$ that satisfy $i_{\alpha_1+\cdots+\alpha_j+1}=\cdots=i_{\alpha_1+\cdots+\alpha_j+\alpha_{j+1}}$ and $i_{\alpha_1+\cdots+\alpha_j}<i_{\alpha_1+\cdots+\alpha_j+1}$ for all $j$.
\end{lemma}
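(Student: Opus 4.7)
I would handle the two identities in parallel.

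For $\iota_k$, the first observation is that $f_n^{(k)}$ splits coordinate-wise as $\sum_{i=1}^n F(x_i)$ with $F(y)=|y|^r/p^{\ell+1}$, and additive derivatives act linearly and coordinate-wise. So $\iota_k(h_1,\ldots,h_k)=\sum_{i=1}^n c\bigl((h_1)_i,\ldots,(h_k)_i\bigr)$ where $c(y_1,\ldots,y_k):=\Delta_{y_1}\cdots\Delta_{y_k}F$ is independent of $x$ (the $k$-fold derivative of a degree-$k$ non-classical polynomial is constant). Standard arguments (symmetry from commutativity of $\Delta$, multilinearity from the identity $\Delta_{y+y'}G-\Delta_yG-\Delta_{y'}G=\Delta_y\Delta_{y'}G$ applied iteratively to reduce corrections to a $(k+1)$-fold derivative, which vanishes) show $c$ is a symmetric multilinear form in $y_1,\ldots,y_k\in\F_p$. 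Since $F$'s degree-$<k$ components are annihilated by the $k$-fold derivative, $c$ is homogeneous of degree $k$, which forces $c(y_1,\ldots,y_k)=A\cdot y_1\cdots y_k$ for a single constant $A\in\U_1\cong\F_p$. Hence $\iota_k=A\sum_{i=1}^n(h_1)_i\cdots(h_k)_i$.

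To identify $A$, I would specialize $y_1=\cdots=y_k=1\in\F_p$ and expand via the alternating-sum identity $\Delta_{h_1}\cdots\Delta_{h_k}F(0)=\sum_{S\subseteq[k]}(-1)^{k-|S|}F(\sum_{i\in S}h_i)$, reducing the task to the $p$-adic congruence
\[
\sum_{j=0}^k(-1)^{k-j}\binom{k}{j}(j\bmod p)^r\equiv(-1)^\ell\,r!\,p^\ell\pmod{p^{\ell+1}}.
\]
This is the crux of the argument. I would attack it by induction on $\ell$. The base case $\ell=0$ is immediate: $k=r<p$ and under $\U_1\cong\F_p$ one has $|x|^r/p\equiv x^r\pmod 1$, whose $r$-fold iterated difference is the classical $r!\,h_1\cdots h_r$. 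For the inductive step I would use $\binom{p-1}{j}\equiv(-1)^j\pmod p$ (Wilson/Fermat), which yields $\Delta^{p-1}h\equiv\sum_{a=0}^{p-1}h(a)\pmod p$ for any period-$p$ function $h$, and then lift this identity modulo higher powers of $p$ to extract the factor $-p$ introduced by each extra block of $p-1$ derivatives.

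For $\tau_\alpha$ the computation is a routine polarization. The polarization identity
\[
\Delta_{h_1}\cdots\Delta_{h_k}M(x)=[t_1\cdots t_k]\,M(t_1h_1+\cdots+t_kh_k)
\]
applied to each monomial $M(x)=x_{i_1}^{\alpha_1}\cdots x_{i_s}^{\alpha_s}$ of total degree $k$, together with multinomial expansion of $\prod_j\bigl(\sum_l t_l(h_l)_{i_j}\bigr)^{\alpha_j}$, gives
\[
\Delta_{h_1}\cdots\Delta_{h_k}M=\sum_{[k]=S_1\sqcup\cdots\sqcup S_s,\,|S_j|=\alpha_j}\ \prod_{j=1}^s\alpha_j!\prod_{l\in S_j}(h_l)_{i_j}.
\]
Summing over $1\le i_1<\cdots<i_s\le n$ (the monomials of $Q_\alpha$) and reindexing the partitions by the $\prod_j\alpha_j!$ permutations $\pi\in\mathfrak S_k$ that carry each partition into the canonical block structure $\{1,\ldots,\alpha_1\},\{\alpha_1+1,\ldots,\alpha_1+\alpha_2\},\ldots$ precisely produces the sum over weakly increasing $(i_1,\ldots,i_k)$ of the shape described in the lemma statement.

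The main obstacle is the $p$-adic congruence driving $\iota_k$. If the Fermat-based induction becomes delicate, a fallback is Fourier expansion on $\F_p$: writing $(j\bmod p)^r=\sum_m\widehat g(m)\omega^{mj}$ for $\omega=e^{2\pi i/p}$ converts the sum into $\sum_{m=1}^{p-1}\widehat g(m)(\omega^m-1)^k$, and the known valuation $v_p(\omega^m-1)=1/(p-1)$ in $\Z[\omega]$ should give enough control to isolate the leading $p^\ell$ contribution modulo $p^{\ell+1}$.
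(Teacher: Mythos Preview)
Your $\tau_\alpha$ argument via polarization is equivalent to the paper's, which iterates the discrete Leibniz rule $\Delta_h(PQ)\equiv(\Delta_hP)Q+P(\Delta_hQ)\pmod{\Poly_{\leqslant\deg P+\deg Q-2}}$; these are two phrasings of the same computation.

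For $\iota_k$ the routes genuinely differ. The paper works at the level of non-classical polynomials: setting $Q_k(x)=|x|^r/p^{\ell+1}$, it proves by explicit case analysis ($r=1$ versus $r\geq2$) that $\Delta_hQ_k\equiv r|h|\,Q_{k-1}\pmod{\Poly_{\leqslant k-2}(\F_p\to\R/\Z)}$, and then iterates, collecting a factor $(p-1)!\equiv-1$ each time $r$ wraps around. You instead use abstract multilinearity to reduce everything to the single integer congruence
\[
S_k:=\sum_{j=0}^k(-1)^{k-j}\binom{k}{j}(j\bmod p)^r\equiv(-1)^\ell r!\,p^\ell\pmod{p^{\ell+1}},
\]
and propose to induct on $\ell$. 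This does work, but the phrase ``lift this identity modulo higher powers of $p$'' hides the substance of the step. Writing $g(j)=(j\bmod p)^r$, $h=\Delta_1^kg$, and $\binom{p-1}{j}=(-1)^j+pe_j$, one has (for $p$ odd) $S_{k+p-1}=\sum_{a=0}^{p-1}h(a)+p\sum_j(-1)^{p-1-j}e_jh(j)$. You then need three facts: (i) $\sum_ah(a)=0$ exactly, by telescoping and $p$-periodicity of $g$; (ii) $h(j)\equiv h(0)\pmod{p^{\ell+1}}$ for every $j$, because $\Delta_1^{k+1}g/p^{\ell+1}$ is a $(k{+}1)$-fold derivative of the degree-$k$ non-classical polynomial $F$ and hence vanishes in $\R/\Z$; and (iii) the elementary evaluation $\sum_{j=0}^{p-1}(-1)^{p-1-j}e_j=-1$. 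Together these give $S_{k+p-1}\equiv -pS_k\pmod{p^{\ell+2}}$, which is exactly the inductive step. The paper's functional recursion packages (i)--(iii) into a single degree-drop statement and never leaves the category of non-classical polynomials; your route is more arithmetic and arguably more transparent once the reduction to $S_k$ is made, but you should spell out (i)--(iii) rather than leave them implicit.
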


\begin{proof}[Proof of \cref{lem:derivative-computation}]
For classical polynomials $P,Q\colon\F_p^n\to\F_p$, of degrees $d_1,d_2$, the discrete Leibniz rule, $\Delta_h(PQ)=(\Delta_h P)Q+P(\Delta_h Q)+(\Delta_h P)(\Delta_h Q)$, can be easily verified. This implies the more convenient $\Delta_h(PQ)\equiv (\Delta_h P)Q+P(\Delta_h Q)\pmod{\Poly_{\leqslant d_1+d_2-2}(\F_p^n\to\F_p)}$. Note that taking $d$ discrete derivatives kills $\Poly_{\leqslant d}(\F_p^n\to\F_p)$, so if $P\equiv Q \pmod{\Poly_{\leqslant d}(\F_p^n\to\F_p)}$, then $\Delta_{h_1}\cdots\Delta_{h_d}P(x)=\Delta_{h_1}\cdots\Delta_{h_d}Q(x)$.

We compute $\tau_\alpha$ first. Define $f\in\Poly_{\leqslant k}(V\to\F_p)$ by $f(x)=x_{i_1}\cdots x_{i_k}$. By many applications of the discrete Leibniz rule, we see that \[\Delta_{h_1}\cdots\Delta_{h_k}f(x)=\sum_{\pi\in\mathfrak S_k}\Delta_{h_1}(x_{i_{\pi(1)}})\cdots\Delta_{h_k}(x_{i_{\pi(k)}})=\sum_{\pi\in\mathfrak S_k}(h_1)_{i_{\pi(1)}}\cdots(h_k)_{i_{\pi(k)}}.\]
Extending this result by linearity gives the formula for $\tau_\alpha$.

Now for $\iota_k$. For any $k\geq 1$, write $k=r+(p-1)\ell$ with $0<r<p$ and $\ell\geq 0$. Define $Q_k\in\Poly_{\leqslant k}(\F_p\to\R/\Z)$ by $Q_k(x)=|x|^r/p^{\ell+1}$. By linearity, it suffices to prove that $\Delta_{h_1}\cdots\Delta_{h_k} Q_k(x)=r!(-1)^\ell h_1\cdots h_k$. We prove that $\Delta_hQ_k(x)\equiv r|h|Q_{k-1}(x) \pmod{\Poly_{\leqslant k-2}(\F_p\to\R/\Z)}$ for $k\geq 2$, while obviously $\Delta_hQ_1(x)=|h|/p$. Iterating (and applying the fact that $(p-1)!\equiv -1\pmod p$) gives the desired result.

We break into two cases. First, if $r=1$ (and $\ell\geq 1$) then
\begin{align*}
\Delta_hQ_k(x)
&=\frac{|x+h|-|x|}{p^{\ell+1}}\\
&=\frac{|h|}{p^{\ell+1}}-\frac{\mathbbm{1}(|x|+|h|\geq p)}{p^\ell}\\
&=\frac{|h|}{p^{\ell+1}}-\sum_{c=p-|h|}^{p-1}\frac{\mathbbm{1}(|x|=c)}{p^\ell}\\
&=\frac{|h|}{p^{\ell+1}}+|h|\frac{|x|^{p-1}}{p^\ell}-\sum_{c=p-|h|}^{p-1}\frac{\mathbbm1(|x|=c)+|x|^{p-1}}{p^\ell}.
\end{align*}
Now $\mathbbm1(|x|=c)\equiv 1-(|x|-c)^{p-1}\pmod p$, say $\mathbbm1(|x|=c)=1-(|x|-c)^{p-1}+pE_{c,p}(x)$ for some function $E_{c,p}$. Then we see
\[\Delta_hQ_k(x)
-|h|\frac{|x|^{p-1}}{p^\ell}
=\frac{|h|}{p^{\ell+1}}
-\sum_{c=p-|h|}^{p-1}\frac{1-(|x|-c)^{p-1}+|x|^{p-1}}{p^\ell}
-\sum_{c=p-|h|}^{p-1}\frac{E_{c,p}(x)}{p^{\ell-1}}.\]
We know that every term in this equation is a non-classical polynomial of degree at most $k-1$ except for the last term. Thus we conclude that the last term is also a non-classical polynomial of degree $k-1$. Furthermore, of the three terms on the right-hand side, the first is a constant, the second is a non-classical polynomial of degree at most $(p-1)(\ell-1)+(p-2)=k-2$ (since the $|x|^{p-1}/p^\ell$ terms cancel), and the third has degree at most $(p-1)(\ell-1)=k-p$ (since it takes values in $\U_{\ell-1}$). Thus the right-hand side lies in $\Poly_{\leqslant k-2}(\F_p\to\R/\Z)$, proving the desired result in the $r=1$ case.

Now assume that $k=r+(p-1)\ell$ where $r\geq 2$. We compute 
\begin{align*}
\Delta_h Q_k(x)
&=\frac{|x+h|^r-|x|^r}{p^{\ell+1}}\\
&=\frac{(|x|+|h|)^r-|x|^r}{p^{\ell+1}}-\mathbbm1(|x|+|h|\geq p)\frac{(|x|+|h|)^r-(|x|+|h|-p)^r}{p^{\ell+1}}\\
&=\frac{\sum_{i=1}^r\binom ri |h|^i|x|^{r-i}}{p^{\ell+1}}-\mathbbm1(|x|+|h|\geq p)\left(\sum_{i=1}^r\frac{(-1)^{i-1}\binom ri (|x|+|h|)^{r-i}}{p^{\ell+1-i}}\right).
\end{align*}
We rewrite this as
\[\Delta_h Q_k(x)-r|h|\frac{|x|^{r-1}}{p^{\ell+1}}=\frac{\sum_{i=2}^r\binom ri |h|^i|x|^{r-i}}{p^{\ell+1}}-\mathbbm1(|x|+|h|\geq p)\left(\sum_{i=1}^r\frac{(-1)^{i-1}\binom ri (|x|+|h|)^{r-i}}{p^{\ell+1-i}}\right).\]
We know that every term in this equation is a non-classical polynomial of degree at most $k-1$ except for the last term, implying that the last term is also a non-classical polynomial of degree $k-1$. Furthermore, of the two terms on the right-hand side, the first is a non-classical polynomial of degree at most $(p-1)\ell+r-2=k-2$ and the second has degree at most $(p-1)\ell=k-r$ (since it takes values in $\U_{\ell-1}$). Thus the right-hand side lies in $\Poly_{\leqslant k-2}(\F_p\to\R/\Z)$, proving the desired result in the $r\geq 2$ case.
\end{proof}

Define the maps $I_k,T_\alpha\colon (\F_p^n)^{k-1}\to\F_p^n$ by the equations $\iota_k(h_1,\ldots,h_k)=I_k(h_1,\ldots,h_{k-1})\cdot h_k$ and $\tau_\alpha(h_1,\ldots,h_k)=T_\alpha(h_1,\ldots,h_{k-1})\cdot h_k$. From \cref{lem:derivative-computation}, clearly $I_k(h_1,\ldots,h_{k-1})_i=(-1)^\ell r!(h_1)_i\cdots(h_{k-1})_i$. To continue the argument we will need to show that $T_\alpha$ can be expressed in a particularly convenient form.

\begin{lemma}
\label{lem:multiaffine-form}
For $\alpha=(\alpha_1,\ldots,\alpha_s)$ with $\alpha_1+\cdots+\alpha_s=k$ and $0<\alpha_i<p$ for all $i$,
\[T_\alpha(h_1,\ldots,h_{k-1})_i=\sum_{J\subseteq[k-1]}C_{i,\alpha,J}(h_{[k-1],<i},(\tau_\beta(h_I))_{\beta,I})\prod_{j\in J}(h_j)_i\] for some functions $C_{i,\alpha,J}(\cdot,\cdot)$, evaluated at the tuple of $(h_j)_{i'}$ for all $j\in[k-1]$ and $i'<i$ and the tuple of $\tau_\beta(h_I)$ for all $I\subseteq[k-1]$ and $\beta=(\beta_1,\ldots,\beta_t)$ with $\beta_1+\cdots+\beta_t= |I|$ and $0<\beta_i<p$ for all $i$. Furthermore, \[C_{i,\alpha,[k-1]}=(-1)^{s-1}\alpha_1(k-1)!.\]
\end{lemma}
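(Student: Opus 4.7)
The plan is to use multilinearity of $\tau_\alpha$ (it is the $k$-th mixed derivative of a degree-$k$ polynomial) to write $T_\alpha(h)_i=\tau_\alpha(h_1,\ldots,h_{k-1},e_i)$, extract monomials $\prod_{l\in J_0}(h_l)_i$ via the formula from \cref{lem:derivative-computation}, and then use inclusion--exclusion via the $\tau_\beta$-values to rewrite sums over coordinates $v>i$ into the required form. Stratifying by which block $B_j$ of $\alpha$ the position $\pi(k)$ lies in forces $v_j=i$ and $J_0:=S_j\setminus\{k\}\subseteq[k-1]$ with $|J_0|=\alpha_j-1$; splitting the remaining indices $[k-1]\setminus J_0$ into $A$ (corresponding to the $v<i$ blocks, with $|A|=\alpha_1+\cdots+\alpha_{j-1}$) and $B$ (corresponding to the $v>i$ blocks) yields
\[T_\alpha(h)_i=\sum_{j=1}^s\sum_{\substack{J_0\subseteq[k-1]\\|J_0|=\alpha_j-1}}\sum_{\substack{A\sqcup B=[k-1]\setminus J_0\\|A|=\alpha_1+\cdots+\alpha_{j-1}}}\Bigl(\prod_{j'}\alpha_{j'}!\Bigr)\prod_{l\in J_0}(h_l)_i\cdot\Phi_{j,A}(h_{A,<i})\cdot\Psi_{j,B}(h_B;i),\]
where $\Phi_{j,A}$ and $\Psi_{j,B}$ are the obvious ordered-partition sums over $v_1<\cdots<v_{j-1}<i$ and $i<v_{j+1}<\cdots<v_s$, respectively.

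The factor $\Psi_{j,B}$ equals $\tau^{>i}_{(\alpha_{j+1},\ldots,\alpha_s)}(h_B)/\prod_{j'>j}\alpha_{j'}!$, where $\tau^{>i}_\beta$ denotes the restriction of $\tau_\beta$ to $w_1>i$. Splitting $\tau_\beta(h_B)$ by how many $w$'s lie $\le i$ and solving for the all-$(w>i)$ term yields the recursion
\[\tau^{>i}_\beta(h_B)=\tau_\beta(h_B)-\sum_{b=1}^u\sum_{\substack{B=B_<\sqcup B_>\\|B_<|=\beta_1+\cdots+\beta_b}}\tau^{\le i}_{(\beta_1,\ldots,\beta_b)}(h_{B_<})\,\tau^{>i}_{(\beta_{b+1},\ldots,\beta_u)}(h_{B_>}),\]
which together with $\tau^{\le i}_\gamma(h_E)=\tau^{<i}_\gamma(h_E)+\sum_{U\subseteq E,\,|U|=\gamma_c}\gamma_c!\prod_{l\in U}(h_l)_i\,\tau^{<i}_{(\gamma_1,\ldots,\gamma_{c-1})}(h_{E\setminus U})$, iterated on $|\beta|$ (with base $\tau^{>i}_{()}=1$), expresses $\Psi_{j,B}$ as a polynomial in $\tau$-values, $(h_l)_{i'<i}$'s, and $(h_l)_i$'s for $l\in B$. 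These $(h_l)_i$ factors combine with the initial $\prod_{l\in J_0}(h_l)_i$ to give monomials $\prod_{l\in J}(h_l)_i$ for various $J\supseteq J_0$, producing the asserted form.

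For the top coefficient $C_{i,\alpha,[k-1]}$, only $j=1$ (so $A=\emptyset$) contributes to the part independent of $(h_l)_{i'<i}$, since for $j\ge 2$ we have $|A|\ge 1$ and $\Phi_{j,A}$ has no constant term in $(h_l)_{i'<i}$. Letting $m_\beta$ denote the constant coefficient of $\prod_{l\in B}(h_l)_i$ in the $\tau^{>i}$-expansion of $\tau^{>i}_\beta(h_B)$, the observation that distinct $w$'s cannot all equal $i$ restricts the relevant $\tau^{\le i}_\gamma$ contribution to the single-part case $\gamma=(\beta_1)$, which gives the recursion $m_\beta=-\binom{|\beta|}{\beta_1}\beta_1!\,m_{(\beta_2,\ldots,\beta_u)}$ with $m_{()}=1$; unrolling yields $m_\beta=(-1)^u|\beta|!$. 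Applied to $\beta=(\alpha_2,\ldots,\alpha_s)$ and combined with the $\alpha_1!$ prefactor from the outer product and the $\binom{k-1}{\alpha_1-1}$ choices of $J_0$, the total contribution is $\binom{k-1}{\alpha_1-1}\alpha_1!(-1)^{s-1}(k-\alpha_1)!=(-1)^{s-1}\alpha_1(k-1)!$, as claimed.

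The main obstacle is the bookkeeping in the recursive expansion: one must verify that every $(h_l)_{i'>i}$ factor generated in rewriting $\Psi_{j,B}$ is indeed absorbed into some $\tau_\beta(h_I)$-value (with none left dangling), and track the $(h_l)_i$ factors through each layer of the $|\beta|$-recursion. Both follow by straightforward induction on $|\beta|$.
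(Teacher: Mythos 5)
Your proposal is correct and follows essentially the same route as the paper's proof. The initial stratification by which block of $\alpha$ receives the coordinate $i$ (your $j$, $J_0$, $A$, $B$ versus the paper's $\ell$, $J$, $I$, $K$ in its equation (4.1)) is identical up to normalization, and your recursive rewriting of $\tau^{>i}_\beta$ in terms of full $\tau$-values, $<i$ data, and $=i$ monomials is the same inclusion--exclusion as the paper's equation (4.2), merely split into two steps (first peeling off $\tau^{\le i}$, then separating the $<i$ and $=i$ parts) rather than done in one. Your top-coefficient computation via the recursion $m_\beta=-\binom{|\beta|}{\beta_1}\beta_1!\,m_{(\beta_2,\ldots,\beta_u)}$, telescoping to $(-1)^u|\beta|!$, reproduces the paper's direct telescoping product $\alpha_1!\binom{k-1}{\alpha_1-1}\prod_{j\ge 2}\bigl(-\alpha_j!\binom{k-\alpha_1-\cdots-\alpha_{j-1}}{\alpha_j}\bigr)$, and both yield $(-1)^{s-1}\alpha_1(k-1)!$.
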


\begin{proof}
Fix $i\in[n]$ for the rest of the proof. We introduce some notation. We have $h_1,\ldots,h_{k-1}\in\F_p^n$. For $I\subseteq[k-1]$, we write $h_I=(h_j)_{j\in I}$. We use \[h_{I,<}=((h_j)_1,\ldots,(h_j)_{i-1})_{j\in I}\in(\F_p^{i-1})^I\qquad\text{and}\qquad h_{I,>}=((h_j)_{i+1},\ldots,(h_j)_n)_{j\in I}\in(\F_p^{n-i})^I.\]
For $\alpha=(\alpha_1,\ldots,\alpha_s)$ we write $\alpha_{<\ell}=(\alpha_1,\ldots,\alpha_{\ell-1})$ and $\alpha_{>\ell}=(\alpha_{\ell+1},\ldots,\alpha_s)$. We define $\alpha_{\leqslant\ell}$ analogously. Recall that we use $|\alpha|=\alpha_1+\cdots+\alpha_s$. 

By inspection, we can write
\begin{equation}\label{eq:Talpha-initial}
T_{\alpha}(h_{[k-1]})_i=\sum_{\ell=1}^s\alpha_\ell!\sum_{\substack{I\sqcup J\sqcup K=[k-1]: \\ |I|=|\alpha_{<\ell}|,\\ |J|=\alpha_{\ell}-1, \\|K|=|\alpha_{>\ell}|}}\left(\prod_{j\in J}(h_j)_i\right)\tau_{\alpha_{<\ell}}(h_{I,<})\tau_{\alpha_{>\ell}}(h_{K,>}).
\end{equation}

We now remove the terms depending on $h_{[k-1],>}$. Take $\beta=(\beta_1,\ldots,\beta_t)$ with $0< \beta_i<p$ for all $i$ and $L\subseteq[k-1]$ with $|L|=|\beta|$. We have the identity
\begin{equation}\label{eq:PIE}
\begin{split}
\tau_\beta(h_{L,>})=
\tau_\beta(h_L)&-\sum_{\ell=1}^t\sum_{\substack{I\sqcup  K=L: \\ |I|=|\beta_{\leqslant\ell}|, \\|K|=|\beta_{>\ell}|}}\tau_{\beta_{\leqslant\ell}}(h_{I,<})\tau_{\beta_{>\ell}}(h_{K,>})\\
&-\sum_{\ell=1}^t\beta_\ell!\sum_{\substack{I\sqcup J\sqcup K=L: \\ |I|=|\beta_{<\ell}|,\\ |J|=\beta_{\ell}, \\|K|=|\beta_{>\ell}|}}\left(\prod_{j\in J}(h_j)_i\right)\tau_{\beta_{<\ell}}(h_{I,<})\tau_{\beta_{>\ell}}(h_{K,>}).
\end{split}
\end{equation}

Repeatedly applying this identity eventually puts $T_{\alpha}(h_{[k-1]})_i$ into the desired form. To see this, note that applying this identity to $\tau_\beta(h_{L,>})$ produces many terms of the form $\tau_{\beta_{>\ell}}(h_{K,>})$ but all of these satisfy $|\beta_{>\ell}|=|K|<|\beta|=|L|$, so we always make progress. 

Finally we need to compute $C_{i,\alpha,[k-1]}$. Obviously this coefficient is a constant since the final decomposition that we produce is multilinear in the $h_1,\ldots,h_{k-1}$. Furthermore, the only way to produce a term that is a multiple of $(h_1)_i\cdots(h_{k-1})_i$ is to have no factors of $\tau_{\beta_{<\ell}}(h_{I,<})$ in that term. (However, we have a choice of $I,J,K$ in \cref{eq:Talpha-initial}.) This means that in the initial decomposition we need to be in the $\ell=1$ case of the sum and every time we use the identity \cref{eq:PIE} we need to be in the $\ell=1$ case of the second sum. Again, in every subsequent choice although $\ell = 1$ is fixed, we have a choice of $I,J,K$. Tracing through all of these reductions, we see that we produce the coefficient
\[\alpha_1!\binom{k-1}{\alpha_1-1}\prod_{j=2}^s\bigg(-(\alpha_j!)\binom{k-\alpha_1-\cdots-\alpha_{j-1}}{\alpha_j}\bigg) = (-1)^{s-1}\alpha_1(k-1)!.\qedhere\]
\end{proof}

So far we have developed the tools to, starting with the assumption of correlation with a classical polynomial, reduce to a situation in which
\begin{align*}
\epsilon^{2^k}
&\leq\E\left[e_p\left(\iota_k-\sum_\alpha c_\alpha \tau_\alpha\right)\right]=\Pr\left(I_k+\sum_\alpha c_\alpha T_\alpha=0\right)\\
&=\Pr_{h_1,\ldots,h_{k-1}\sim\F_p^n}\left(\forall i\in[n],\: I_k(h_1,\ldots,h_{k-1})_i+\sum_\alpha c_\alpha T_\alpha(h_1,\ldots,h_{k-1})_i=0\right).
\end{align*}
Therefore, we need a bound on the probability that a multiaffine function equals zero.

\begin{lemma}
\label{lem:zero-prob}
Let $L\colon\F_p^r\to\F_p$ be a multiaffine function whose leading coefficient (i.e., coefficient of $x_1\cdots x_r$) is non-zero. Then \[\Pr_{x_1,\ldots,x_r\sim\F_p}(L(x_1,\ldots,x_r)=0)\leq1-\left(1-\frac1p\right)^r=:1-c_{p,r}.\]
\end{lemma}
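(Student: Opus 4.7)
My plan is to prove \cref{lem:zero-prob} by induction on $r$, conditioning on the value of $x_r$ and using the multiaffine structure.

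The base case $r=1$ is immediate: $L(x_1)=ax_1+b$ with $a\neq 0$, so $L$ vanishes at exactly one point, giving probability $1/p = 1-(1-1/p)$.

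For the inductive step, since $L$ is multiaffine, I would write
\[L(x_1,\ldots,x_r)=x_r\cdot M(x_1,\ldots,x_{r-1})+N(x_1,\ldots,x_{r-1}),\]
where $M$ and $N$ are multiaffine in $x_1,\ldots,x_{r-1}$. The coefficient of $x_1\cdots x_{r-1}$ in $M$ equals the coefficient of $x_1\cdots x_r$ in $L$, which is nonzero by hypothesis. Now condition on $(x_1,\ldots,x_{r-1})$. If $M(x_1,\ldots,x_{r-1})\neq 0$, then $L$ is a nonconstant affine function of $x_r$, which vanishes for exactly one of the $p$ values of $x_r$, so the conditional probability of $L=0$ is $1/p$. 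If instead $M(x_1,\ldots,x_{r-1})=0$, then we bound the conditional probability of $L=0$ trivially by $1$.

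Writing $q:=\Pr(M(x_1,\ldots,x_{r-1})=0)$, by the inductive hypothesis applied to $M$ we have $q\leq 1-(1-1/p)^{r-1}$. Combining the two cases,
\[\Pr(L=0)\leq q\cdot 1+(1-q)\cdot\frac{1}{p}=\frac{1}{p}+q\left(1-\frac{1}{p}\right)\leq \frac{1}{p}+\left(1-\left(1-\frac{1}{p}\right)^{r-1}\right)\left(1-\frac{1}{p}\right)=1-\left(1-\frac{1}{p}\right)^r,\]
completing the induction. There is no real obstacle here; the only thing to verify carefully is that $M$ inherits the multiaffinity and leading-coefficient hypothesis needed to apply the inductive hypothesis, which is immediate from the decomposition.
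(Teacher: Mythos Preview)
Your proof is correct and follows essentially the same approach as the paper: induction on $r$ via the decomposition $L=x_rM+N$, using that $M$ is multiaffine with nonzero leading coefficient so the inductive hypothesis applies. The only cosmetic difference is that the paper takes $r=0$ as the base case and phrases the inequality in terms of $\Pr(L\neq 0)\geq (1-1/p)\Pr(M\neq 0)$, but the content is identical.
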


\begin{proof}
We prove this result by induction on $r$. The bound is trivially true for $r=0$.

For $r\geq 1$, we can write \[L(x_1,\ldots,x_r)=x_rM(x_1,\ldots,x_{r-1})+N(x_1,\ldots,x_{r-1})\] where $M$ and $N$ are multiaffine and the leading coefficient of $M$ is non-zero. Then for each fixed $x_1,\ldots,x_{r-1}$, there is at most 1 choice of $x_r$ that makes $L$ vanish unless $M(x_1,\ldots,x_r)=0$. Then\[\Pr_{x_1,\ldots,x_r\sim\F_p}(L(x_1,\ldots,x_r)\neq 0)\geq\left(1-\frac1p\right)\Pr_{x_1,\ldots,x_{r-1}\sim\F_p}(M(x_1,\ldots,x_{r-1})\neq 0).\]The second term can be handled by the inductive hypothesis.
\end{proof}

We now have the tools to prove the main theorem. The probability we are considering is the probability that $n$ multiaffine functions vanish simultaneously. If these were independent, by the above lemma, we could bound the probability by $(1-c_{p,k})^n=o_{p,k;n\to\infty}(1)$.

In order to introduce such independence, we can take a union bound over all possible $\tau_\beta(h_I)$. Then \cref{lem:multiaffine-form} shows that our multiaffine forms have the following property: if we plug in values for $((h_1)_{i'},\ldots,(h_{k-1})_{i'})_{i' < i}$ and $\tau_\beta(h_I)$ for all $\beta,I$, then $T_\alpha(h_1,\ldots,h_{k-1})_i$ is multiaffine in $(h_1)_i,\ldots,(h_{k-1})_i$ with non-zero leading coefficient. Then we may reveal $((h_1)_i,\ldots,(h_{k-1})_i)$ one-by-one for $i\in[n]$, and find that the total probability is bounded by $(1-c_{p,k})^{n}$. As the number of possible choices in the union bound is $O_{p,k}(1)$ we will be able to prove the desired bound.

\begin{proof}[Proof of \cref{thm:main}]
Take $k\geq p+1$. Consider $f_n^{(k)}\colon\F_p^n\to\R/\Z$ defined in \cref{eqn:f_k}. Since $f_n^{(k)}$ is a non-classical polynomial of degree $k$, we know that $\|e(f_n^{(k)})\|_{U^{k+1}}=1$. For a classical polynomial $P\in\Poly_{\leqslant k}(\F_p^n\to\F_p)$, set $\epsilon=|\E_x e(f_n^{(k)}(x)+|P(x)|/p)|$. We will prove that $\epsilon=o_{p,k;n\to\infty}(1)$.

By \cref{thm:quasisymmetric-restriction}, there exists $m=\omega_{p,k;n\to\infty}(1)$ and a subset $I\subseteq[n]$ such that for all $y_{[n]\setminus I}\in\F_p^{[n]\setminus I}$, \[P(x_I,y_{[n]\setminus I})=Q(x_I)+P_{y_{[n]\setminus I}}(x_I)\]where $Q$ is a homogeneous quasisymmetric polynomial of degree $k$ and $P_{y_{[n]\setminus I}}$ is a polynomial of degree at most $k-1$.

Without loss of generality, assume that $I=[m]$. Then \[\epsilon=|\E_{x\sim\F_p^n} e(f_n^{(k)}(x)+|P(x)|/p)|\leq \E_{y\sim\F_p^{n-m}}|\E_{x\sim\F_p^m}e(f_n^{(k)}(x,y)+|P(x,y)|/p)|.\]
Then by the pigeonhole principle, there exists $y\in\F_p^{n-m}$ such that the inner expectation is at least $\epsilon$. Fix this choice of $y$ for the rest of the proof. Note that $f_n^{(k)}(x,y)=f_m^{(k)}(x)+c_y$ where $c_y=f_{n-m}^{(k)}(y)\in\R/\Z$ is a constant.
Thus we have \[\epsilon\leq|\E_{x\sim\F_p^m}e(f_m^{(k)}(x)+|Q(x)|/p+|P_y(x)|/p+c_y)|.\]

Note that the right-hand side is a $U^1$-norm. Using the monotonicity of the Gowers norms (see, e.g., \cite[Lemma B.1.(ii)]{TZ12}) we deduce
\begin{align*}
\epsilon^{2^k}
&\leq\|e(f_m^{(k)}+|Q|/p+|P_y|/p+c_y)\|_{U^1}^{2^k}\\
&\leq\|e(f_m^{(k)}+|Q|/p+|P_y|/p+c_y)\|_{U^k}^{2^k}\\
&=\E_{x,h_1,\ldots,h_k}\partial_{h_1}\cdots\partial_{h_k}e(f_m^{(k)}+|Q|/p+|P_y|/p+c_y)(x)\\
&=\E_{x,h_1,\ldots,h_k}e(\Delta_{h_1}\cdots\Delta_{h_k}(f_m^{(k)}+|Q|/p+|P_y|/p+c_y)(x)).
\end{align*}

Taking $k$ discrete derivatives kills (non-classical) polynomials of degree at most $k-1$ and turns those of degree $k$ into constants. Thus the final expression is equal to \[\E_{h_1,\ldots,h_k}e(\Delta_{h_1}\cdots\Delta_{h_k}(f_m^{(k)}+|Q|/p)).\]

Since $Q$ is a homogeneous quasisymmetric polynomial of degree $k$, it can be written as $\sum_\alpha c_\alpha Q_\alpha$ where $\alpha$ ranges over all tuples $(\alpha_1,\ldots,\alpha_s)$ with $|\alpha|=k$ and $0<\alpha_i<p$ for all $i$ and the $c_{\alpha}\in\F_p$ are arbitrary coefficients. 

We computed $\Delta_{h_1}\cdots\Delta_{h_k}f_m^{(k)}$ and $\Delta_{h_1}\cdots\Delta_{h_k}Q_{\alpha}$ in \cref{lem:derivative-computation}. These are the $k$-linear forms denoted $\iota_k,\tau_\alpha\colon(\F_p^m)^k\to\F_p$ respectively. Thus so far we have shown that \[\epsilon^{2^k}\leq \E_{h_1,\ldots,h_k}e_p\left(\iota_k(h_1,\ldots,h_k)+\sum_\alpha c_\alpha\tau_\alpha(h_1,\ldots,h_k)\right).\] 

For an arbitrary $k$-linear form $\sigma\colon(\F_p^m)^k\to\F_p$, there is a unique $(k-1)$-linear function $S\colon(\F_p^m)^{k-1}\to\F_p^m$ that satisfies $\sigma(h_1,\ldots,h_k)=S(h_1,\ldots,h_{k-1})\cdot h_k$. Furthermore, we have
\[\E_{h_1,\ldots,h_k}e_p(\sigma(h_1,\ldots,h_k))=\E_{h_1,\ldots,h_k}e_p(S(h_1,\ldots,h_{k-1})\cdot h_k)=\Pr_{h_1,\ldots,h_{k-1}}(S(h_1,\ldots,h_{k-1})=0).\]

From this we conclude\[\epsilon^{2^k}\leq\Pr_{h_1,\ldots,h_{k-1}}\left(\forall i\in[m],\: I_k(h_1,\ldots,h_{k-1})_i+\sum_\alpha c_{\alpha}T_\alpha(h_1,\ldots,h_{k-1})_i=0\right).\]

Recall that $I_k(h_1,\ldots,h_{k-1})_i=(-1)^\ell r!(h_1)_i\cdots (h_{k-1})_i$ where $k=r+(p-1)\ell$ with $\ell\geq 1$ and $0<r<p$. Note that $(-1)^\ell r!\neq 0$ in $\F_p$. Furthermore, \cref{lem:multiaffine-form} states that
\[T_{\alpha}(h_1,\ldots,h_{k-1})_i=\sum_{J\subseteq[k-1]}C_{i,\alpha, J}(h_{[k-1],<i},(\tau_\beta(h_I))_{\beta, I})\prod_{j\in J}(h_j)_i.\]
In other words $T_{\alpha}(h_1,\ldots,h_{k-1})_i$, viewed just as a function of $(h_1)_i,\ldots,(h_{k-1})_i$ is multiaffine with coefficients given by $C_{i,\alpha, J}$. Additionally, \cref{lem:multiaffine-form} also gives the critical fact that the leading coefficient, $C_{i,\alpha,[k-1]}$, is equal to $(-1)^{s-1}\alpha_1(k-1)!$ for all $i$. Since $k\geq p+1$, we have that $C_{i,\alpha,[k-1]}=0$ (recall that the coefficients live in $\F_p$).

This implies that $I_k(h_1,\ldots,h_{k-1})_i+\sum_\alpha c_{\alpha}T_\alpha(h_1,\ldots,h_{k-1})_i$, viewed just as a function of $(h_1)_i,\ldots,(h_{k-1})_i$ is multiaffine with non-zero leading coefficient, say
\[I_k(h_1,\ldots,h_{k-1})_i+\sum_\alpha c_{\alpha}T_\alpha(h_1,\ldots,h_{k-1})_i=\sum_{J\subseteq[k-1]}C_{i,J}(h_{[k-1],<i},(\tau_\beta(h_I))_{\beta,I})\prod_{j\in J}(h_j)_i\]where $C_{i,[k-1]}=(-1)^\ell r!\neq 0$ for all $i$.

By \cref{lem:zero-prob}, if the coefficients are fixed then this function vanishes with probability at most $1-c_{p,k}<1$. To complete the proof, we need to show that we can approximately decouple these events. Formally,
\begin{align*}
\epsilon^{2^k}
&\leq\Pr_{h_1,\ldots,h_{k-1}}\left(\forall i\in[m],\: \sum_{J\subseteq[k-1]}C_{i,J}(h_{[k-1],<i},(\tau_\beta(h_I))_{\beta,I})\prod_{j\in J}(h_j)_i=0\right)\\
&=\sum_{(A_{\beta,I})_{\beta, I}}\Pr_{h_1,\ldots,h_{k-1}}\left(\forall \beta, I,\: \tau_\beta(h_I)=A_{\beta,I} \cap
 \sum_{J\subseteq[k-1]}C_{i,J}(h_{[k-1],<i},(\tau_\beta(h_I))_{\beta,I})\prod_{j\in J}(h_j)_i=0 \right)\\
 &\le \sum_{(A_{\beta,I})_{\beta, I}}\Pr_{h_1,\ldots,h_{k-1}}\left(
 \sum_{J\subseteq[k-1]}C_{i,J}(h_{[k-1],<i},(A_{\beta,I})_{\beta,I})\prod_{j\in J}(h_j)_i=0 \right).\\
\end{align*}
The final replacement simply comes by substituting in the values $A_{\beta,I}$.

Now for each $i\in[m]$, let $E_i$ be the event that $\sum_{J\subseteq[k-1]}C_{i,J}(h_{[k-1],<i},(A_{\beta,I})_{\beta,I})\prod_{j\in J}(h_j)_i=0$. We wish to bound
\[\Pr_{h_1,\ldots,h_{k-1}}\left(E_i
\right.\left|
\forall i'<i,\: E_{i'}\right).\]
Since the event we are conditioning on only depends on $h_{[k-1],<i}$, the conditional distribution of $(h_1)_i,\ldots,(h_{k-1})_i$ is still uniform. Thus we can upper bound the above probability by
\begin{align*}
\sup_{h_{[k-1],<i}}\Pr_{(h_1)_i,\ldots,(h_{k-1})_i\sim\F_p}\left(\sum_{J\subseteq[k-1]}C_{i,J}(h_{[k-1],<i},(A_{\beta,I})_{\beta,I})\prod_{j\in J}(h_j)_i=0\right).
\end{align*}
By \cref{lem:zero-prob}, and the fact that $C_{i,[k-1]}=(-1)^\ell r!\neq 0$ always, this probability is upper-bounded by $1-c_{p,k}<1$. Putting everything together, we have shown that $\epsilon^{2^k}\leq O_{p,k}((1-c_{p,k})^{m})$. (The hidden constant is the number of terms in the sum over $(A_{\beta,I})_{\beta, I}$, which depends on $p,k$ but not on $m,n$. It can be bounded by $p^{4^k}$.) We showed that $m=\omega_{p,k;n\to\infty}(1)$, implying that $\epsilon=o_{p,k;n\to\infty}(1)$.
\end{proof}


\providecommand{\bysame}{\leavevmode\hbox to3em{\hrulefill}\thinspace}
\providecommand{\MR}{\relax\ifhmode\unskip\space\fi MR }
\providecommand{\MRhref}[2]{%
  \href{http://www.ams.org/mathscinet-getitem?mr=#1}{#2}
}
\providecommand{\href}[2]{#2}

\end{document}